\begin{document}

\theoremstyle{definition} 

 \newtheorem{definition}{Definition}[section]
 \newtheorem{remark}[definition]{Remark}
 \newtheorem{example}[definition]{Example}

\newtheorem*{notation}{Notation}  

\theoremstyle{plain}      

 \newtheorem{proposition}[definition]{Proposition}
 \newtheorem{theorem}[definition]{Theorem}
 \newtheorem{corollary}[definition]{Corollary}
 \newtheorem{lemma}[definition]{Lemma}

\def\H{{\mathbb H}}
\def\F{{\mathcal F}}
\def\R{{\mathbb R}}
\def\Q{\hat{\mathbb Q}}
\def\Z{{\mathbb Z}}
\def\E{{\mathcal E}}
\def\N{{\mathbb N}}
\def\X{{\mathcal X}}
\def\Y{{\mathcal Y}}
\def\C{{\mathbb C}}
\def\D{{\mathbb D}}
\def\G{{\mathcal G}}
\def\T{{\mathcal T}}

\title[Intersections between foliations]{Intersection numbers between horizontal foliations of quadratic differentials}

\subjclass{}

\keywords{}
\date{\today}

\author{Dragomir \v Sari\' c}
\address{DS: Mathematics PhD. Program, Graduate Center of the City University of New York, 365 Fifth Avenue, New York, NY 10016-4309}
\address{DS: Department of Mathematics, Queens College of the City University of New York, 65--30 Kissena Blvd., Flushing, NY 11367}
\email{Dragomir.Saric@qc.cuny.edu}

\author{Taro Shima}
\address{TS:  Mathematics PhD. Program, Graduate Center of the City University of New York, 365 Fifth Avenue, New York, NY 10016-4309}
\email{tshima@gradcenter.cuny.edu}
\maketitle

\begin{abstract}
We establish that the intersection number between the horizontal foliations of any two finite-area holomorphic quadratic differentials on an arbitrary Riemann surface is finite. Our main result shows that the intersection number is jointly continuous in the $L^1$-norm on the quadratic differentials. A corollary is that the Jenkins-Strebel differentials are not dense in the space of all finite-area holomorphic quadratic differentials when the infinite Riemann surface is not parabolic.
\end{abstract}

\section{Introduction}

Let $X=\mathbb{H}/\Gamma$, $\Gamma <PSL_2(\mathbb{R})$, be an arbitrary Riemann surface equipped with the hyperbolic metric.  Denote by $A(X)$ the space of all finite-area holomorphic quadratic differentials $\varphi$  on $X$ equipped with the $L^1$-norm
$$
\|\varphi\|=\int_X|\varphi |.
$$
Each non-zero element $\varphi\in A(X)$ induces a (horizontal) measured foliation on $X$ by the differential $Im (\sqrt{\varphi (z)}dz)$. It turns out that each (regular) leaf of the horizontal foliation $\mathcal{F}_{\varphi}$ of $\varphi$ is homotopic to a simple geodesic on $X$ (\cite{MardenStrebel1}). We push forward the transverse measure to obtain a measured geodesic lamination $\mu_{\varphi}$ on $X$. Marden and Strebel \cite{MardenStrebel} proved that the map $\varphi\mapsto\mu_{\varphi}$ is injective when $X$ is a parabolic surface, and this was extended to arbitrary surfaces in \cite{Saric22}. When $X=\mathbb{H}/\Gamma$ is a compact surface, Hubbard and Masur \cite{HMmain} proved that every measured lamination on $X$ is homotopic (i.e., realized in the homotopy class) to the horizontal foliation of a unique holomorphic quadratic differential. When $X$ is not of finite (hyperbolic) area, many measured laminations cannot be realized by finite-area holomorphic quadratic differentials (see \cite{Saric22}). In \cite{Saric24} and \cite[Theorem 1.1]{Saric25}, it was established that on an arbitrary $X=\mathbb{H}/\Gamma$, a measured lamination can be realized by a finite-area holomorphic quadratic differential if and only if it is homotopic to a partial measured foliation with finite Dirichlet integral.

The space of finite-area holomorphic differentials $A(X)$ plays the role of the cotangent space to the Teichm\"uller space $T(X)$ at the (marked) point $X$ (see \cite{Ahlfors}, \cite{GardinerLakic}). In addition, the space $A(X)$ is associated to the Teichm\"uller extremal maps, Strebel points, and extremal and uniquely extremal quasiconformal maps (see \cite{ReichStrebel, BLMM, GardinerLakic}). The space $A(X)$ also plays a role in studying the harmonic mappings (see \cite{Sagman}).

In this paper, we continue the study of the finite-area holomorphic quadratic differentials from the point of view of their horizontal foliations and measured laminations on infinite Riemann surfaces. Given $\varphi\in A(X)$, the corresponding measured lamination $\mu_{\varphi}$, in general, has an infinite length (the length is locally the hyperbolic length of leaves times the transverse measure) on $X$. However, the intersection number between $\mu_{\varphi}$ and $\mu_{\psi}$ is finite for all $\varphi ,\psi\in A(X)$. This follows from the following formula (see Proposition \ref{thm:inters-extremal_l}), which generalizes a result of Minsky \cite{Minsky} on compact surfaces.

\begin{proposition}
\label{thm:intersection}
Let $X=\mathbb{H}/\Gamma$ for any Fuchsian group $\Gamma <PSL_2(\mathbb{R})$. For any non-zero $\varphi ,\psi\in A(X)$, we have
$$
i(\mu_{\varphi},\mu_{\psi})^2\leq(\int_X|\varphi | )\cdot (\int_X|\psi |),
$$
where $i(\cdot ,\cdot )$ is the intersection number between two measured laminations. In particular, $i(\mu_{\varphi},\mu_{\psi})<\infty$. 
\end{proposition}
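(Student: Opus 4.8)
The plan is to realize the intersection number as an integral over $X$ and then apply the Cauchy--Schwarz inequality. First I would recall that the horizontal foliation $\F_{\varphi}$ carries the transverse measure given locally by the real $1$-form $\mathrm{Im}(\sqrt{\varphi})$, and likewise $\F_{\psi}$ carries $\mathrm{Im}(\sqrt{\psi})$. Since $\mu_{\varphi}$ is obtained from $\F_{\varphi}$ by straightening its leaves to geodesics, and geodesic representatives minimize intersection within a homotopy class, the intersection number of the two laminations is bounded above by the geometric intersection number of the two foliations, which is computed by integrating the product of the transverse measures:
$$
i(\mu_{\varphi},\mu_{\psi})\le\int_X\bigl|\mathrm{Im}(\sqrt{\varphi})\wedge\mathrm{Im}(\sqrt{\psi})\bigr|.
$$
This is the content I would extract from Proposition \ref{thm:inters-extremal_l}, and it is the inequality direction that works in our favor, so equality is not needed.

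Next comes the pointwise algebra. Working in a local conformal coordinate $z=x+iy$, I would write $\sqrt{\varphi}=a\,dz$ and $\sqrt{\psi}=b\,dz$ for locally defined functions $a,b$ (the sign ambiguity of the square roots is harmless after taking absolute values). A direct computation, using $\mathrm{Im}(a\,dz)=\mathrm{Im}(a)\,dx+\mathrm{Re}(a)\,dy$, gives
$$
\mathrm{Im}(\sqrt{\varphi})\wedge\mathrm{Im}(\sqrt{\psi})=\mathrm{Im}(a\bar b)\,dx\wedge dy,
$$
so that $\bigl|\mathrm{Im}(\sqrt{\varphi})\wedge\mathrm{Im}(\sqrt{\psi})\bigr|\le|a|\,|b|\,dx\,dy$ since $|\mathrm{Im}(a\bar b)|\le|a||b|$. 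Moreover the flat area measures induced by the differentials are $|\varphi|=|a|^2\,dx\,dy$ and $|\psi|=|b|^2\,dx\,dy$.

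Putting these together and applying Cauchy--Schwarz,
$$
i(\mu_{\varphi},\mu_{\psi})\le\int_X|a|\,|b|\,dx\,dy\le\Bigl(\int_X|a|^2\,dx\,dy\Bigr)^{1/2}\Bigl(\int_X|b|^2\,dx\,dy\Bigr)^{1/2}=\Bigl(\int_X|\varphi|\Bigr)^{1/2}\Bigl(\int_X|\psi|\Bigr)^{1/2},
$$
and squaring yields the claimed inequality; finiteness of $i(\mu_{\varphi},\mu_{\psi})$ is then immediate since $\varphi,\psi\in A(X)$. I expect the main obstacle to be the first step: justifying the integral bound on an arbitrary, possibly infinite-type, surface, where $\mu_{\varphi}$ and $\mu_{\psi}$ may have infinite length and the intersection number must be understood as a supremum over compact pieces. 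I would handle this with a compact exhaustion $X_1\subset X_2\subset\cdots$ of $X$, apply the minimizing property of geodesic representatives on each $X_n$ where the classical Minsky-type theory applies, and pass to the limit, using the finiteness of $\int_X|\varphi|$ and $\int_X|\psi|$ to control the contribution of the ends. Verifying that no intersection mass escapes to infinity under the straightening is the delicate point.
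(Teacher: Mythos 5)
Your pointwise algebra and the Cauchy--Schwarz step are correct, and on a compact surface your argument is essentially Minsky's own proof of \cite[Lemma 5.1]{Minsky}. The genuine gap is the first inequality, $i(\mu_{\varphi},\mu_{\psi})\leq\int_X\bigl|\mathrm{Im}(\sqrt{\varphi})\wedge\mathrm{Im}(\sqrt{\psi})\bigr|$. You cannot ``extract'' this from Proposition \ref{thm:inters-extremal_l}: that proposition \emph{is} the statement being proved (it coincides with the one under review), so citing it is circular, and the integral bound appears nowhere in the paper --- it is precisely where all the work lies on an infinite surface. ``Geodesic representatives minimize intersection within a homotopy class'' is not an available fact in this setting: the intersection number of the laminations is \emph{defined} via a locally finite disjoint cover of the closure of the transverse-intersection set by quadrilaterals, and one must prove that straightening leaves to geodesics does not create crossing mass not already present in the foliations. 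Your fallback plan --- exhaust by compact $X_n$ and apply ``classical Minsky-type theory'' there --- fails as stated: Minsky's lemma applies to quadratic differentials on finite-area surfaces \emph{without boundary}, whereas $\varphi|_{X_n}$ is not such an object; leaves are cut into arcs whose homotopy classes rel $\partial X_n$ must be controlled; and the restricted foliation is realized by a \emph{different} differential $\varphi_n$ on a doubled surface, whose norm is not trivially bounded by $\int_X|\varphi|$. These are exactly the difficulties the paper's proof is built to resolve: it erases leaves homotopic into $\partial X_n$, doubles $X_n$, realizes the doubled foliations by differentials via Theorem \ref{thm:mf=qd} and \cite{Saric25}, applies Minsky on the doubles, controls $\int_{X_n}|\varphi_n|\leq D(\mathcal{F}_n)\leq D(\mathcal{F})=\int_X|\varphi|$ by the Dirichlet principle (Theorem \ref{thm:dir_principle}), separately proves $i(\mu_{\mathcal{F}_n},\mu_{\mathcal{G}_n})\to i(\mu_{\mathcal{F}},\mu_{\mathcal{G}})$, and performs a second doubling when $\Gamma$ is of the second kind or trivial.

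That said, your direct route can be repaired without any exhaustion, and would then be shorter than the paper's argument. By \cite{MardenStrebel1}, each lift to $\mathbb{H}$ of a horizontal trajectory of a finite-area differential has exactly two distinct endpoints on $\partial_{\infty}\mathbb{H}$, and $\mu_{\varphi}$ is the push-forward of the transverse measure under the map sending a leaf to the geodesic with the same endpoints. Two geodesics in the supports of $\mu_{\varphi}$ and $\mu_{\psi}$ cross transversely if and only if their endpoint pairs are linked on $\partial_{\infty}\mathbb{H}$, in which case the corresponding lifted leaves must cross at least once; since two distinct geodesics in $\mathbb{H}$ cross at most once, a Fubini argument over the quadrilaterals in the paper's definition shows that the crossing mass of the laminations is at most the crossing mass of the foliations counted with multiplicity, which is exactly the wedge integral. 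Supplying this linking argument closes your proof; as written, the key step is unsupported and the proposed substitute would not go through.
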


The main result of the paper is that the intersection number is continuous for the $L^1$-norm on the space $A(X)$ (see Theorem \ref{thm:int_numb_cont}). This requires an analysis of the trajectories of finite-area holomorphic quadratic differentials, which is necessarily divided into three cases: $X=\mathbb{H}$ (the universal case), $X=\mathbb{H}/\Gamma$ with $\Gamma$ of the first kind and $\Gamma$ of the second kind.

\begin{theorem}
\label{thm:cont-intersection}
The intersection number defines a function
$$
i:A(X)\times A(X)\to\mathbb{R}.
$$
The function $i$ is jointly continuous for the $L^1$-norm.
\end{theorem}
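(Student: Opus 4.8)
The plan is to prove joint continuity by reducing it to a "one-sided at a time" continuity argument combined with the quantitative bound from Proposition~\ref{thm:intersection}. Since intersection number is symmetric, it suffices to show that if $\varphi_n\to\varphi$ and $\psi_n\to\psi$ in $L^1$, then $i(\mu_{\varphi_n},\mu_{\psi_n})\to i(\mu_\varphi,\mu_\psi)$. The first maneuver is to split the increment via the triangle-type decomposition
\begin{equation*}
|i(\mu_{\varphi_n},\mu_{\psi_n})-i(\mu_\varphi,\mu_\psi)|\le |i(\mu_{\varphi_n},\mu_{\psi_n})-i(\mu_{\varphi_n},\mu_\psi)|+|i(\mu_{\varphi_n},\mu_\psi)-i(\mu_\varphi,\mu_\psi)|,
\end{equation*}
so that each term involves varying only one argument. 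I would first handle the case $X=\mathbb{H}$, where every $\varphi$ pulls back to a foliation by straight trajectories and the intersection number has a clean integral-geometric meaning, then bootstrap to $\Gamma$ of the first and second kind by working equivariantly in the universal cover and using a fundamental domain to localize the mass.

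\emph{Upper semicontinuity} is the easier half and should follow fairly directly from Proposition~\ref{thm:intersection}. Concretely, the bilinearity-like structure lets me estimate the difference $i(\mu_{\varphi_n},\mu_\psi)-i(\mu_\varphi,\mu_\psi)$ by an intersection number against the "difference" of the differentials; even though $A(X)$ is not linear under the foliation correspondence, the geometric intersection number is subadditive enough that a splitting of $X$ into a large compact (or large-area) piece $K$ and its complement, together with $\|\varphi_n-\varphi\|\to0$, shows that the mass of $\mu_{\varphi_n}$ that can intersect $\mu_\psi$ outside $K$ is controlled by $\int_{X\setminus K}|\varphi_n|$, which is uniformly small by $L^1$-convergence. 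The bound $i(\mu_{\varphi_n},\mu_\psi)^2\le \|\varphi_n\|\,\|\psi\|$ gives the needed tail control and equicontinuity.

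\emph{Lower semicontinuity} is where I expect the main obstacle. The difficulty is that intersection number can drop in the limit: trajectories of $\varphi_n$ that cross trajectories of $\psi$ transversally might, as $n\to\infty$, become tangent or escape to the ends of the noncompact surface, so that positive intersection mass leaks out to infinity and is lost. To rule this out I would fix a large relatively compact subsurface $K\subset X$ on which I can control the trajectory structure of the limit $\varphi$ (using the fact, quoted in the introduction, that regular leaves are homotopic to simple geodesics, and that the measured lamination $\mu_\varphi$ is well-defined). On $K$ the foliations converge in a strong enough sense — this is exactly where the promised "analysis of the trajectories of finite-area holomorphic quadratic differentials" divided into the three cases $X=\mathbb{H}$, $\Gamma$ of the first kind, $\Gamma$ of the second kind must be invoked — that $\liminf_n i(\mu_{\varphi_n},\mu_\psi)\ge i(\mu_\varphi,\mu_\psi)-\varepsilon(K)$, with $\varepsilon(K)\to0$ as $K$ exhausts $X$. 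The honest technical heart is showing that $L^1$-convergence of quadratic differentials forces local uniform convergence of the horizontal foliations on compacta away from the (moving) zeros; near zeros and near the ends one absorbs the error into the small $L^1$-tail.

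Combining the two semicontinuity statements yields continuity in a single variable; feeding this back into the triangle decomposition above, and using the symmetric estimate for the second term with the roles of the arguments exchanged, gives joint continuity and completes the proof. The three-case division of the trajectory analysis is what makes the lower-semicontinuity step rigorous in each geometric regime, and I would expect the $\Gamma$-of-the-second-kind case (with a nontrivial limit set and infinite-area ends) to demand the most care, since there the mass can escape into funnels.
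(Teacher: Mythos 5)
Your skeleton matches the paper's in outline (three cases $\mathbb{D}$, $\Gamma$ of the first kind, $\Gamma$ of the second kind; tail control by $L^1$-smallness combined with the Minsky-type inequality of Proposition~\ref{thm:inters-extremal_l}), but there are two genuine gaps. The smaller one is in your upper-semicontinuity step: there is no ``bilinearity-like'' or subadditive structure to invoke, since $\varphi\mapsto\mu_\varphi$ is nonlinear and no measured lamination represents a ``difference'' of differentials. What actually makes the tail estimate work in the paper is a specific chain: the leaves to be discarded form a \emph{partial} measured foliation with small Dirichlet integral; by the realization theorem of \cite{Saric25} it is the horizontal foliation of some finite-area differential; the Dirichlet principle (Theorem~\ref{thm:dir_principle}) bounds that differential's norm by the small Dirichlet integral; only then does Proposition~\ref{thm:inters-extremal_l} bound its intersection number with everything else. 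Moreover, to know that the discarded leaves do not secretly contribute to intersections in the compact core, one needs Lemmas~\ref{lem:compact_leaves} and \ref{lem:compact_leaves_surface}, which tie a trajectory's position to that of its geodesic straightening; your ``mass outside $K$ is controlled by $\int_{X\setminus K}|\varphi_n|$'' presupposes exactly this unproved alignment.

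The more serious gap sits at what you call the honest technical heart. Local uniform convergence of $\varphi_n$ to $\varphi$ (hence of the foliations away from zeros) is essentially automatic from $L^1$-convergence (Lemma~\ref{lem:L^1-conv}) and is \emph{not} the difficulty. The difficulty is that whether two leaves of the straightened laminations cross is determined by asymptotic data -- endpoints on $S^1$ when $X=\mathbb{D}$, homotopy classes relative to the boundary of a larger subsurface in general -- and two trajectories can be uniformly close on every compactum while their geodesic representatives do or do not intersect. The paper spends nearly all of its effort here, with a different mechanism in each case: for $\mathbb{D}$, a decomposition into finitely many strips bounded by totally regular trajectories whose endpoints converge in $\overline{\mathbb{D}}$ by Strebel's theorem \cite{Strebel-geom-u-d}, so that endpoint separation (hence transverse crossing) persists for large $n$; for $\Gamma$ of the first kind, where no endpoint control is available, an indirect argument in which small-angle crossings carry $\mu\times\nu$-measure less than $\epsilon$, definite-angle crossings are detected by non-homotopy rel $\partial K_3$, and convergence of the heights $i(\gamma,\mu_{\varphi_n})\to i(\gamma,\mu_{\varphi})$ over simple closed curves (Strebel \cite[Theorem 24.7]{Strebel}) forces convergence of the transverse measures of the finitely many homotopy classes of arcs; for the second kind, doubling across the ideal boundary with punctures inserted to reduce to the first kind. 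Your proposal correctly predicts the three-case division and the tail strategy, but supplies no mechanism for passing from local convergence of foliations to convergence of intersection numbers, so as written the lower-semicontinuity step (and equally the matching upper estimate on the compact part) does not go through.
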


A finite-area holomorphic quadratic differential is called Jenkins-Strebel if, outside of a set of zero area, all horizontal trajectories are closed and homotopic and necessarily form a single cylinder (for the existence, see Strebel \cite[Chapter VI]{Strebel}). In \cite{Saric24}, it was proved that the Teichm\"uller distance on $T(X)$ is given by the supremum of the logarithm of the ratios of the extremal lengths over all simple closed curves on a {\it parabolic} Riemann surface  $X$. This result depends on the fact that the Jenkins-Strebel differentials are dense in $A(X)$ for $X$ a parabolic Riemann surface. Using the continuity of the intersection numbers, we show that the Jenkins-Strebel differentials are not dense on an infinite Riemann surface with an infinitely generated fundamental group that is not parabolic.

\begin{theorem}
\label{thm:non-density}
There exists a non-parabolic Riemann surface with an infinitely generated fundamental group and a finite-area holomorphic quadratic differential $\varphi$ that cannot be approximated by a sequence of Jenkins-Strebel differentials in the $L^1$-norm. Almost every horizontal trajectory of $\varphi$ is transient.
\end{theorem}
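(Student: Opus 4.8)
The plan is to prove the statement by exhibiting an explicit example rather than arguing abstractly, and then to use the joint continuity of the intersection number (Theorem \ref{thm:cont-intersection}) as the only nontrivial input. The guiding principle is a rigidity/cost dichotomy: for a Jenkins--Strebel differential $\psi$ the lamination $\mu_\psi$ is a single weighted simple closed geodesic $a\gamma$ (the core of the cylinder, with weight equal to its height), and such curves are recurrent and compactly supported; by contrast, transience of $\varphi$ will force the mass of $\mu_\varphi$ to be carried, in part, arbitrarily far out into the ends of $X$. On a \emph{parabolic} surface the ends are thin (cusp-like), so one can close up escaping trajectories cheaply in extremal length, which is exactly what underlies the density proved in \cite{Saric24}; on a \emph{non-parabolic} surface the ends are fat (positive capacity, existence of a Green's function), and reaching out there with a closed geodesic is expensive. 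The contradiction should come from the incompatibility between this cost and the finite $L^1$-normalization that any approximating sequence must satisfy.

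For the construction I would take $X=\mathbb{H}/\Gamma$ with $\Gamma$ of the second kind and infinitely generated, so that $X$ is non-parabolic and carries funnel-type ends. To produce $\varphi$ I would invoke the realizability criterion of \cite{Saric24} and \cite[Theorem 1.1]{Saric25}: a measured lamination equals $\mu_\varphi$ for some $\varphi\in A(X)$ precisely when it is homotopic to a partial measured foliation of finite Dirichlet integral. Accordingly I would specify a finite-Dirichlet partial foliation whose leaves leave every compact subset of $X$ and never return, and let $\varphi\in A(X)$ be the resulting finite-area differential; the non-parabolicity of $X$ (equivalently, a positive-measure set of geodesics escaping to the ends, in the sense relevant to \cite{Saric22}) is exactly what makes such a transient foliation of finite area available. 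Verifying that almost every horizontal trajectory of this $\varphi$ is genuinely transient would be carried out through the trajectory analysis for second-kind groups, and supplies the final assertion of the theorem.

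Granting the construction, suppose for contradiction that there are Jenkins--Strebel differentials $\psi_n\to\varphi$ in the $L^1$-norm, and write $\mu_{\psi_n}=a_n\gamma_n$ with $\gamma_n$ a simple closed geodesic, $a_n>0$ its height, and the normalization $\|\psi_n\|=a_n^2\,\mathrm{Ext}_X(\gamma_n)\to\|\varphi\|<\infty$. Theorem \ref{thm:cont-intersection} gives $a_n\,i(\gamma_n,\mu_\eta)=i(\mu_{\psi_n},\mu_\eta)\to i(\mu_\varphi,\mu_\eta)$ for every fixed $\eta\in A(X)$, while Proposition \ref{thm:intersection} keeps these pairings uniformly bounded. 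Choosing a fixed transversal $\delta$ crossing the compact core of $X$ shows that the $\gamma_n$ must keep a definite weighted amount of curve through the core, since $i(\mu_\varphi,\delta)>0$; choosing test differentials $\eta$ whose foliations detect the ends shows, using transience, that the $\gamma_n$ must simultaneously penetrate arbitrarily deep into the funnel ends. The intended contradiction is that penetrating a non-parabolic end forces $\mathrm{Ext}_X(\gamma_n)\to\infty$, whence $a_n\to0$ by the norm bound, and this is incompatible with the core pairing $a_n\,i(\gamma_n,\delta)$ remaining bounded away from zero.

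The hard part, and the true heart of the argument, is this last quantitative balance: making precise that transience of $\varphi$ forces the approximating core curves both to carry definite core mass and to run deep into the fat, non-parabolic end, and that no allotment of the weights $a_n$ can satisfy both constraints under the finite-area normalization. This is the one place where non-parabolicity is essential—the same balance is satisfiable through a thin parabolic end, consistent with the density statement of \cite{Saric24}—and it requires a genuine extremal-length (modulus) estimate at the end together with control of the geometric intersection numbers $i(\gamma_n,\delta)$. The second, more technical obstacle is the verification that almost every trajectory of the constructed $\varphi$ is transient, which passes between the flat trajectory picture and the hyperbolic geodesic-lamination picture and relies on the potential theory of $X$. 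I expect these two points to absorb essentially all of the work, with the final appeal to continuity being formal once they are in place.
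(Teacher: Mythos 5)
You have assembled the right ingredients (a second--kind, infinitely generated surface; the realization theorem \cite[Theorem 1.1]{Saric25}; $\mu_{\psi_n}=a_n\gamma_n$ for Jenkins--Strebel differentials; continuity of $i$), but the decisive step is left as a plan, and the mechanism you propose for it is not the one that works. The missing idea, which makes the theorem an almost immediate consequence of Theorem \ref{thm:cont-intersection}, is to choose the \emph{test} differential $\psi$ so that its horizontal foliation consists entirely of cross-cuts, i.e., arcs with both endpoints on the ideal boundary. Concretely, the paper takes a Jordan domain $J_2\subset X$ whose boundary contains two disjoint arcs of the ideal boundary, maps it conformally to a Euclidean rectangle with those arcs as vertical sides, pulls back the horizontal foliation of $dz^2$, and realizes the resulting finite-Dirichlet partial foliation $\mathcal{F}_2$ by some $\psi\in A(X)$; the differential $\varphi$ is built the same way from a second Jordan domain $J_1$ whose ideal boundary arcs interlink with those of $J_2$, so that $i(\mu_\varphi,\mu_\psi)=v_1v_2>0$ (in the model example, $X=[-\frac12,\frac12]^2$ minus points accumulating at the boundary, $\varphi=-dz^2$, and $\psi$ comes from a horizontal substrip, with $i(\mu_\varphi,\mu_\psi)=\frac14$). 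The point is that \emph{every} closed curve on $X$ can be homotoped off a cross-cut foliation, so $i(\mu_{\psi_n},\mu_\psi)=a_n\,i(\gamma_n,\mu_\psi)=0$ identically for every Jenkins--Strebel $\psi_n$, regardless of how deep $\gamma_n$ runs or how the weights $a_n$ behave. Continuity then gives $0=\lim_n i(\mu_{\psi_n},\mu_\psi)=i(\mu_\varphi,\mu_\psi)>0$, a contradiction; and transience of almost every trajectory of $\varphi$ is immediate since every horizontal leaf is a cross-cut leaving each compact set. No extremal-length estimate in the ends, no ``core mass'' analysis, and no potential theory enter the proof.

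Your proposed ``hard part'' is therefore both unexecuted and, as framed, self-undermining. To force the curves $\gamma_n$ into the ends you need test differentials $\eta$ with $\liminf_n a_n\,i(\gamma_n,\mu_\eta)>0$, hence $i(\gamma_n,\mu_\eta)>0$; but the natural end-detecting foliations on a surface of the second kind are exactly the cross-cut foliations, for which $i(\gamma,\mu_\eta)=0$ for every closed geodesic $\gamma$ --- which is already the contradiction, with no quantitative balance left to prove. If instead you insist on test foliations with closed or recurrent leaves so that the pairing with closed curves can be positive, then the positivity $i(\mu_\varphi,\mu_\eta)>0$ for your transient $\varphi$ becomes unclear, and the claim that penetrating a non-parabolic end forces $\mathrm{Ext}_X(\gamma_n)\to\infty$ is asserted rather than proved. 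There is also a scope issue: your ``non-parabolic $=$ fat ends'' framing suggests an argument that would cover first-kind non-parabolic surfaces, but both your construction and the paper's require nonempty ideal boundary (second kind), since the whole mechanism rests on leaves ending on the ideal boundary. Since you explicitly defer the two essential points, the proposal as written is an outline with the core missing, not a proof.
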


The above Theorem provides a clue to how Teichm\"uller spaces of parabolic and non-parabolic Riemann surfaces may differ. In \S 3, we give a new definition of the extremal length of measured foliations which works for all Riemann surfaces.

\section{Integrable, proper partial measured foliations}

Recall the definition of an integrable, proper partial measured foliation on $X$ from \cite[Definition 2.1]{Saric24}.
\begin{definition}
\label{def:partial-fol}
Let $X=\mathbb{H}/\Gamma$ be an arbitrary Riemann surface.
A {\it partial measured foliation} $\mathcal{F}$ on $X$ consists of countably many triples $\{ (v_i, U_i,E_i)\}_i$ where $U_i\subset X$ is a closed Jordan domain, $E_i\subset U_i$ is a measurable set and $v_i:U_i\to\mathbb{R}$ is a continuous function satisfying the following conditions:
\begin{enumerate}
\item[(i)] The family $\{ U_i\}_i$ is locally finite on $X$.

\item[(ii)]
The boundary of $U_i$, denoted as $\partial U_i$, is piecewise smooth and is divided into four closed arcs with non-overlapping interiors. Within this partition, two specific opposite arcs, namely $a_i^1$ and $a_i^2$, are chosen as the vertical sides of $\partial U_i$.

\item[(iii)]
The functions $v_i:U_i\to\mathbb{R}$ are differentiable with surjective $dv_i$ at the tangent space of each point of the interior of $U_i$. 

By the Implicit Function Theorem, the pre-image $v_i^{-1}(c)$ for $c\in\mathbb{R}$, if non-empty, is an open differentiable arc. We require that $v_i^{-1}(c)$ at one end accumulates to a unique point in $a_i^1\subset \partial U_i$ and the other end to a unique point in $a_i^2\subset \partial U_i$ and that $E_i$ is foliated by the differentiable arcs  $v_i^{-1}(c)$ for $c\in\mathbb{R}$.

\item[(iv)] For any two sets $U_i$ and $U_j$,  we have
 \begin{equation}
\label{eq:coord-change}
v_i=\pm v_j+const
\end{equation} 
on $U_i\cap U_j$.

\item[(v)] For any two $E_i$ and $E_j$, we have
$$
E_i\cap \bar{U}_j=E_j\cap\bar{U}_i
$$
where $\bar{U}$ is the closure of $U$. 
\end{enumerate}
\end{definition}

Since $dv_i$ are surjective, the Implicit Function Theorem implies that the sets $U_i$ are foliated by differentiable arcs $v_i^{-1}(c)$ for $c\in\mathbb{R}$. Therefore, $U_i$ has a product structure with leaves being horizontal arcs. We only consider the foliation arcs in $E_i$ to comprise the horizontal leaves of the partial foliation. A {\it horizontal trajectory} is a maximal extension of a horizontal leaf of a single chart.

A partial measured foliation is {\it proper} if each component of a lift to $\mathbb{H}$ of each horizontal trajectory of $\mathcal{F}$ on $X$ has two endpoints on $\partial_{\infty}\mathbb{H}$. The Dirichlet integral $D(v_i)=\int_{U_i}|\nabla v_i|$ is well-defined locally, and by taking a partition of unity, we define the Dirichlet integral $D(\mathcal{F})$ of $\mathcal{F}$ on $X$; it can be finite or infinite. If $D(\mathcal{F})<\infty$ then we say that $\mathcal{F}$ is integrable.  

We will say that two proper, partial measured foliations $\mathcal{F}_1$ and $\mathcal{F}_2$ are {\it equivalent} if there is a homotopy between lifts $\widetilde{\mathcal{F}}_1$ and $\widetilde{\mathcal{F}}_2$ to the universal cover $\mathbb{H}$ such that the push-forward (under the homotopy) of the transverse measure on $\widetilde{\mathcal{F}}_1$ equals the the transverse measure on $\widetilde{\mathcal{F}}_2$.

If $\varphi$ is a finite-area holomorphic quadratic differential on $X$, then the horizontal (measured) foliation $\mathcal{F}_{\varphi}$ of $\varphi$ given by $Im (\sqrt{\varphi (z)}dz)$ is a proper integrable partial foliation as above. The converse is true
(see \cite[Theorem 1.6]{Saric24} and \cite[Theorem 1.1]{Saric25})

\begin{theorem}
\label{thm:mf=qd}
Let $X=\mathbb{H}/\Gamma$ be an arbitrary Riemann surface. Given any integrable, proper partial measured foliation $\mathcal{F}$ on $X$, there exists a unique finite-area holomorphic quadratic differential $\varphi$ on $X$ whose horizontal foliation is equivalent to $\mathcal{F}$.
\end{theorem}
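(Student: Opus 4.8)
The plan is to realize $\mathcal{F}$ as the horizontal foliation of a harmonic object and then recognize that object as the square root of a holomorphic quadratic differential. First I would lift the data to the universal cover: the partial foliation $\mathcal{F}$ pulls back to a $\Gamma$-invariant measured foliation $\widetilde{\mathcal{F}}$ on $\mathbb{H}$, whose transverse measure is locally $|d\tilde v|$ for the lifted height functions $\tilde v$. By condition (iv) the height is well-defined only up to the sign-and-constant ambiguity $\tilde v\mapsto \pm\tilde v+c$, which is exactly the ambiguity of $\mathrm{Im}\int\sqrt{\varphi}\,dz$ away from the zeros of a quadratic differential. So the goal becomes: within the equivalence (homotopy-of-transverse-measure) class of $\widetilde{\mathcal{F}}$, produce a representative whose height function is harmonic; its holomorphic completion then yields $\varphi$.

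To build the harmonic representative I would use the Dirichlet principle. Consider the class $\mathcal{C}$ of all proper partial measured foliations equivalent to $\mathcal{F}$, and minimize the Dirichlet integral $D(\cdot)$ over $\mathcal{C}$. The integrability hypothesis $D(\mathcal{F})<\infty$ guarantees that $\mathcal{C}$ is nonempty with finite infimum. After setting up a minimizing sequence and invoking lower semicontinuity of $D$ together with compactness in the appropriate $\Gamma$-equivariant Sobolev space, I would extract a minimizer $v^*$. The Euler--Lagrange equation forces $v^*$ to be harmonic on the interior of its support; writing $v^*=\mathrm{Im}(F)$ for a local holomorphic $F$ and setting $\varphi=(F')^2$ produces a holomorphic quadratic differential. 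The sign ambiguity squares away, so $\varphi$ is globally well-defined, and $\Gamma$-equivariance of $v^*$ makes $\varphi$ descend to $X$. Because the Dirichlet integral agrees with the natural-coordinate area, $\int_X|\varphi|=D(v^*)\le D(\mathcal{F})<\infty$, so $\varphi\in A(X)$.

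It remains to check that the horizontal foliation $\mathcal{F}_\varphi$ is equivalent to $\mathcal{F}$ and that $\varphi$ is unique. Equivalence holds by construction: the level sets of $v^*$ are the horizontal trajectories of $\varphi$, they lie in the equivalence class of $\widetilde{\mathcal{F}}$, and the transverse measures match since both equal $|dv^*|$. For uniqueness I would appeal to the injectivity of $\varphi\mapsto\mu_\varphi$ on an arbitrary $X$ (cited in the introduction), which forces two finite-area differentials with equivalent horizontal foliations to coincide; alternatively, strict convexity of $D$ along $\mathcal{C}$ gives uniqueness of the energy minimizer directly.

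The hard part will be the existence step on an infinite-type surface. On a compact $X$ this minimization is classical, but here $X$ may be infinitely generated and of infinite hyperbolic area, so a minimizing sequence can lose energy into the ends and the minimizer may fail to exist or to descend with finite area. The role of the two standing hypotheses is precisely to control this: properness (each lift of a trajectory terminates at two points of $\partial_\infty\mathbb{H}$) prevents trajectories from spiraling into the ideal boundary, while finiteness of the Dirichlet integral bounds the energy that can concentrate at infinity. I would therefore devote the bulk of the argument to a no-escape-of-energy estimate near the ends, dividing into the three cases flagged in the introduction — $X=\mathbb{H}$, $\Gamma$ of the first kind, and $\Gamma$ of the second kind — since the position of $\partial_\infty\mathbb{H}$ relative to the limit set differs in each and dictates how the trajectories close up or exit the surface.
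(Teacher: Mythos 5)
Your uniqueness argument is fine: equivalence of horizontal foliations implies equality of the induced measured laminations, and the injectivity of $\varphi\mapsto\mu_{\varphi}$ on arbitrary surfaces (\cite{Saric22}) then forces $\varphi=\psi$. (Your ``alternative'' via strict convexity of $D$ does not work, however: the class $\mathcal{C}$ of partial measured foliations equivalent to $\mathcal{F}$ is not a convex set of functions --- heights are only locally defined up to $\pm v + const$ --- so there is no convex functional to speak of.) The genuine gap is in the existence step, and it is precisely the step you defer. The direct method as you set it up is not available: the competitors are partial measured foliations, not elements of a ``$\Gamma$-equivariant Sobolev space,'' and the constraint defining $\mathcal{C}$ (a homotopy of the lifts to $\mathbb{H}$ carrying one transverse measure to the other, with leaves landing at prescribed pairs of points of $\partial_\infty\mathbb{H}$) is not weakly closed. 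A Dirichlet-bounded minimizing sequence can degenerate in exactly the ways your last paragraph gestures at --- leaves collapsing, transverse measure escaping into the ends, limiting level sets changing their endpoints at infinity --- and on an infinite-type surface there is no Rellich-type compactness to rule this out. Moreover, even granting a minimizer $v^*$, the Euler--Lagrange step is not routine: $v^*$ minimizes subject to the foliation/measure constraint, not subject to boundary values, so harmonicity on the support does not follow from the classical Dirichlet principle; this is the delicate point already in the compact case of Hubbard--Masur. Announcing that ``the bulk of the argument'' will be a no-escape-of-energy estimate, without supplying its mechanism, leaves the theorem unproved.

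For comparison: the paper itself gives no internal proof --- Theorem \ref{thm:mf=qd} is quoted from \cite[Theorem 1.6]{Saric24} and \cite[Theorem 1.1]{Saric25} --- and the argument there is not a direct minimization. Existence is obtained by exhausting $X$ by finite-area geodesic subsurfaces $X_n$, doubling across the boundary to reduce to finite-type surfaces where the Hubbard--Masur realization applies, using the Dirichlet principle to get the uniform bound $\int_{X_n}|\varphi_n|\le D(\mathcal{F})<\infty$, extracting a locally uniform limit $\varphi$, and then identifying the horizontal foliation of $\varphi$ with $\mathcal{F}$ via heights (intersection numbers with simple closed curves, or cross-cuts when $\Gamma$ is of the second kind). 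The exhaustion-plus-heights scheme is exactly the replacement for the compactness you lack: the heights theorem pins down the equivalence class of the limit, which a weak limit of a minimizing sequence alone cannot do. If you want to salvage your variational outline, you would in effect have to re-prove that scheme, at which point citing it is the honest route.
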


\section{The Dirichlet principle and extremal metrics}
In this section, we give a new definition of an extremal length of a measured foliation which works for all Riemann surfaces and prove that the horizontal foliation of a holomorphic quadratic differential has a minimal Dirichlet integral over all partial measured laminations homotopic to the horizontal foliation (Theorem \ref{thm:dir_principle}). 

\subsection{A new definition of the extremal length of measured foliations}
In the case of a compact Riemann surface, the extremal length of a measured foliation $\mathcal{F}$ is defined to be $\int_X|\varphi_{\mathcal{F}}|$, where $\varphi_{\mathcal{F}}$ is the unique holomorphic quadratic differential whose horizontal foliation is homotopic to $\mathcal{F}$ (see Kerckhoff \cite{Ker}). This formula coincides with the usual definition of the extremal length of the curve family on $X$ homotopic to a simple closed geodesic, and the general form is justified by the fact that every holomorphic quadratic differential on a compact Riemann surface is the limit (in the $L^1$-norm) of a sequence of Jenkins-Strebel differentials. 

When $X$ is an infinite Riemann surface that is not parabolic, one does not expect to have this approximation result. In fact, on $X=\mathbb{H}$, there are no Jenkins-Strebel differentials. We introduce an alternative definition of the extremal length of an integrable measured foliation that works for all surfaces without the use of holomorphic quadratic differentials.

\begin{definition}
\label{def:ext_l_measured_g_l}
Let $\mu$ be a measured geodesic lamination on an arbitrary Riemann surface $X=\mathbb{H}/\Gamma$. Assume that there exists a proper integrable partial measured foliation $\mathcal{F}$ that realizes $\mu$, i.e., $\mu$ and $\mathcal{F}$ have the same intersection numbers with the homotopy classes of simple closed curves (cross-cuts if $\Gamma$ is of the second kind including $\Gamma =\{ id\}$, i.e., $X=\mathbb{H}$). The {\it extremal length} $EL(\mu )$ of $\mu$ is defined by
$$
EL(\mu )=\inf_{\mathcal{F}} D(\mathcal{F}),
$$
the infimum is taken over all $\mathcal{F}$ that realize $\mu$. 
\end{definition}

We show below that the standard definition of the extremal length of a weighted simple closed multi-curve agrees with our definition. First, we introduce some definitions.

A {\it geodesic current} $\alpha$ on $X$ is a measure $\tilde{\alpha}$ on the space of geodesics $G(\mathbb{H})=(\mathbb{H}\times\mathbb{H}\setminus{\mathrm{diag}})/\mathbb{Z}_2$ that is invariant under the action of the covering group $\Gamma$ (see \cite{Bonahon}, \cite{BonS}). The {\it support} of $\alpha$ is the set of geodesics on $X$ that form the image under the projection of the covering map $\mathbb{H}\to X$ of the support of $\tilde{\alpha}$ on $G(\mathbb{H})$.  

A {\it measured lamination} is a geodesic current $\alpha$ whose support is a geodesic lamination, i.e., no two geodesics of the support intersect (see \cite{Thurston}). The support of the lift $\tilde{\alpha}$ of a measured lamination $\alpha$ on $X$ is a $\Gamma$-invariant measure on $G(\mathbb{H})$ whose support is a geodesic lamination on $\mathbb{H}$. The geodesics of the support of $\tilde{\alpha}$ are pairwise mutually disjoint, but they can have ideal endpoints in common. 

Let $C$ be a simple closed curve on $X$ that is not homotopic to a point or a puncture. Given a conformal metric $\rho$ on $X$, define the length of a rectifiable curve $C'$ by $\ell_{\rho}(C')=\int_{C'}\rho (z)|dz|$, where the infimum is over all simple closed curves $C'$ homotopic to $C$. Let $A_{\rho}(X)=\int_X \rho^2(z)dxdy$. The {\it extremal length} $EL(C)$ of the (homotopy class of) curve $C$ is defined by
$$
EL(C)=\sup_{\rho} \frac{\ell_{\rho}(C)^2}{A_{\rho}(X)},
$$
where the supremum is taken over all conformal metrics $\rho$ with $0<A_{\rho}(X)<\infty$. 
A metric $\rho_0$ is {\it extremal} for $C$ is $EL(C)=\frac{\ell_{\rho_0}(C)^2}{A_{\rho_0}(X)}$.

Hubbard and Masur \cite{HubMas}, and Renelt \cite{Renelt} proved that there exists a unique holomorphic quadratic differential $\varphi_{C,1}$ with a single cylinder of non-singular closed horizontal trajectories homotopic to $C$ of height $1$ with the complement of the cylinder having zero area (see Strebel \cite[Theorem 21.1]{Strebel}). The metric $\rho_0=|\varphi_{C,1}|^\frac{1}{2}$ is extremal for $C$.  Let $C'$ be any closed horizontal trajectory of $\varphi_{C,1}$ and $\ell =\int_{C'}|\varphi_{C,1}(z)|^{\frac{1}{2}}|dz|$. Then we have
$$
EL(C)=\frac{[\int_{C'}|\varphi_{C,1}(z)|^{\frac{1}{2}}|dz|]^2}{\int_X|\varphi_{C,1}(z)|dxdy}=\frac{\ell^2}{1\cdot\ell}=\ell =\int_X|\varphi_{C,1}(z)|dxdy,
$$
because the height is $1$.

We normalize the above formula slightly differently, which lends to extending the extremal length to weighted multicurves and even to measured laminations of $X$. Let $\varphi_{C,b}=b^2\varphi_{C,1}$ be the holomorphic quadratic differential with the same cylinder as $\varphi_{C,1}$ but with height $b$. 

We can, therefore, consider this cylinder as a foliation that realizes the weighted curve $(C,b)$ and define the {\it extremal length} of $(C,b)$ to be
$$
EL(C,b):=b^2EL(C)=\int_X|\varphi_{C,b}(z)|dxdy.
$$

For any other (partial) foliation $\mathcal{F}$ of $X$ which realizes $(C,b)$, we have that 
$$
ht_{\mathcal{F}}(B)=ht_{\varphi_{C,b}}(B)
$$
for all simple closed curves $B$ in $X$. By the definition
$
ht_{\mathcal{F}}(B)=\inf_{B'\tilde B}\int_{B'}d\mathcal{F}$, where the infimum is over all $B'$ homotopic to $B$. Then, the Dirichlet principle \cite[Theorem 24.5]{Strebel} says that
$$
D(\mathcal{F})\geq \int_X|\varphi_{C,b}|.
$$

Therefore, we can define (Definition \ref{def:ext_l_measured_g_l}) the extremal length of the weighted curve $(C,b)$ by
\begin{equation}
\label{eq:ext_l_one_curve}
EL(C,b)=\inf_{\mathcal{F}} D(\mathcal{F}),
\end{equation}
infimum is taken over all partial foliations that realize $(C,b)$.

The formula (\ref{eq:ext_l_one_curve}) holds even for multi-curves  $\{ (C_n,b_n)\}$, where $C_n$ are simple closed curves that are pairwise disjoint, non-homotopic, and not homotopic to a point or a cusp. The number of curves is either finite or countably infinite since $X$ is an infinite surface. The weights $b_n$ are positive, and there are either finitely or infinitely many curves in a multi-curve. We consider embedded annuli $R_n$ in $X$ pairwise disjoint and homotopic to $C_n$ for each $n$. Denote by $EL(R_n)$ the extremal length of curves in $R_n$ homotopic to $C_n$. Define
\begin{equation}
\label{eq:ext_l_multi_curve}
EL(\{ (C_n,b_n)\})=\inf_{\{R_n\}}\sum_nb_n^2 \cdot EL(R_n),
\end{equation}
infimum is taken over all families of annuli $\{ R_n\}$.

If the sum in (\ref{eq:ext_l_multi_curve}) is finite for some choice of annuli $\{ R_n\}$, then (by \cite[Theorem 22.1]{Strebel}) there exists a finite-area holomorphic quadratic differential $\varphi$ that realizes the multi-curve $\{ (C_n,b_n)\}$. In particular, the horizontal foliation of $\varphi$ has a cylinder of height $b_n$ homotopic to $C_n$ for each $n$, and the complement of the union of the cylinders has zero area. In addition, we have
$$
EL(\{ (C_n,b_n)\})=\int_X|\varphi (z)|dxdy. 
$$

If $\mathcal{F}_{\{ (C_nb_n)\}}$ is an arbitrary foliation that realizes the multi-curve $\{ (C_n,b_n)\}$, then (by \cite[Theorem 22.1]{Strebel})
$$
D(\mathcal{F}_{\{ (C_nb_n)\}})\geq \int_X|\varphi (z)|dxdy.
$$
Therefore
$$
EL(\{ (C_n,b_n)\})=\inf_{\mathcal{F}}D(\mathcal{F}),
$$
the infimum is taken over all partial foliations $\mathcal{F}$ that realize the multi-curve $\{ (C_n,b_n)\}$.

\subsection{The Dirichlet principle}
 Given the homotopy class of an integrable, proper, partial measured foliation, we prove that the horizontal foliation of the unique holomorphic quadratic differential in the homotopy class minimizes the Dirichlet integral.

\begin{theorem}[The Dirichlet principle]
\label{thm:dir_principle}
Let $X=\mathbb{H}/\Gamma$ be an arbitrary Riemann surface and $\varphi$ be a finite-area holomorphic quadratic differential on $X$. For any proper measured lamination $\mathcal{F}$ on $X$ with $D(\mathcal{F})<\infty$ that is equivalent to the horizontal foliation of $\varphi$, we have
$$
\int_X|\varphi (z)|dxdy\leq D(\mathcal{F}). 
$$
\end{theorem}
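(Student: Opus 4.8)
The plan is to run the classical length--area estimate in the natural coordinates of $\varphi$, feeding in the hypothesis that $\mathcal{F}$ is equivalent to $\mathcal{F}_\varphi$ only through the equality of intersection numbers with the transversals (vertical trajectories) of $\varphi$. Off the zeros of $\varphi$ and their critical trajectories, a set $\Sigma$ of zero area, $\varphi$ has a natural coordinate $\zeta=\xi+i\eta$ with $\varphi=d\zeta^2$, so that $|\varphi|=d\xi\,d\eta$ and the horizontal foliation $\mathcal{F}_\varphi$ consists of the leaves $\{\eta=\mathrm{const}\}$ with transverse measure $|d\eta|$. Writing $v$ for the local leaf function of $\mathcal{F}$ (so the transverse measure of $\mathcal{F}$ is $|dv|$) and computing the Dirichlet integral in these conformal coordinates, I would first discard the nonnegative tangential term,
$$
D(\mathcal{F})=\int_X|\nabla v|^2\,d\xi\,d\eta\ \ge\ \int_X v_\eta^2\,d\xi\,d\eta,
$$
so that it remains only to prove $\int_X v_\eta^2\,d\xi\,d\eta\ge\int_X|\varphi|$. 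If $D$ is instead normalised with the first power $|\nabla v|$, the same slicing below gives the conclusion even more directly, using $|\nabla v|\ge|v_\eta|$ in place of Cauchy--Schwarz.

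The heart of the matter is a lower bound for $v_\eta$ along the vertical trajectories. Let $\beta=\{\xi=\mathrm{const}\}$ be a generic complete vertical trajectory of $\varphi$. It is transverse to $\mathcal{F}_\varphi$, and, being the orthogonal crossing in the flat $\varphi$--metric, it minimises the transverse measure $\int|d\eta|$ in its homotopy class; hence its $\mathcal{F}_\varphi$--height equals its vertical $\varphi$--length $\ell(\beta)=\int_\beta d\eta$, i.e.\ $i(\mathcal{F}_\varphi,\beta)=\ell(\beta)$. Because $\mathcal{F}$ and $\mathcal{F}_\varphi$ are equivalent they have the same intersection numbers with all simple closed curves and crosscuts, so $i(\mathcal{F},\beta)=i(\mathcal{F}_\varphi,\beta)=\ell(\beta)$; as $\beta$ is one competitor in the infimum defining $ht_{\mathcal{F}}(\beta)=i(\mathcal{F},\beta)$, this yields $\int_\beta|v_\eta|\,d\eta=\int_\beta|dv|\ge\ell(\beta)$. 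Cauchy--Schwarz on $\beta$ then gives
$$
\int_\beta v_\eta^2\,d\eta\ \ge\ \frac{1}{\ell(\beta)}\Big(\int_\beta|v_\eta|\,d\eta\Big)^{2}\ \ge\ \ell(\beta).
$$
I would finish by disintegrating the area form along the vertical foliation: since $\int_X|\varphi|=\int_X d\xi\,d\eta=\int\ell(\beta)\,|d\xi|$, the integral of the vertical length against the transverse measure $|d\xi|$ of the vertical foliation, Fubini yields
$$
\int_X v_\eta^2\,d\xi\,d\eta=\int\Big(\int_\beta v_\eta^2\,d\eta\Big)|d\xi|\ \ge\ \int\ell(\beta)\,|d\xi|=\int_X|\varphi|,
$$
which combined with the first display proves the theorem.

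I expect the main obstacle to be the justification of this slicing on an infinite surface. In contrast to the compact case, the vertical foliation of a finite--area holomorphic quadratic differential can have recurrent or non--proper leaves, so I must show that $|d\xi|$--almost every vertical trajectory $\beta$ is a proper, simple arc whose $\mathcal{F}_\varphi$--height is exactly $\ell(\beta)$, legitimising both the identity $i(\mathcal{F}_\varphi,\beta)=\ell(\beta)$ and the Fubini disintegration. This is the point at which the finiteness of the area and the properness of the foliations are essential; the exceptional family of bad leaves carries no $|d\xi|$--measure and so does not affect the integrals. A subsidiary point to verify is that the equivalence of $\mathcal{F}$ and $\mathcal{F}_\varphi$, defined through a transverse--measure--preserving homotopy of the lifts to $\mathbb{H}$, really does force the equality of intersection numbers with these transversals (including crosscuts when $\Gamma$ is of the second kind). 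Once that is in hand, the estimate above is unaffected by the precise normalisation of the Dirichlet integral, since in every case the foliation $v=\eta$ saturates the inequality with value $\int_X|\varphi|$.
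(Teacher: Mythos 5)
Your overall strategy---the length--area method in the natural parameter of $\varphi$, feeding in the equivalence of $\mathcal{F}$ with $\mathcal{F}_\varphi$ only through heights of vertical transversals---is exactly the strategy behind the proof the paper invokes (Marden--Strebel, and Strebel, Theorem 24.2), so your instincts are right. But there is a genuine gap at the step you yourself flag and then dismiss: you assert that $|d\xi|$-almost every complete vertical trajectory $\beta$ is a proper arc with $i(\mathcal{F}_\varphi,\beta)=\ell(\beta)<\infty$, the exceptional leaves carrying no $|d\xi|$-measure. This is false in general. The vertical foliation of a finite-area differential decomposes (Strebel, \S 13) into cylindrical domains, spiral domains, and---on arbitrary surfaces---cross-cut strips, and spiral domains, in which every vertical leaf is recurrent, dense in the domain, and of infinite $\varphi$-length, can carry positive (even full) area. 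On such a set your Cauchy--Schwarz step degenerates, since $\ell(\beta)=\infty$ yields no lower bound on $\int_\beta v_\eta^2\,d\eta$, and the disintegration $\int_X|\varphi|=\int\ell(\beta)\,|d\xi|$ over \emph{complete} leaves is not a legitimate Fubini: for a minimal foliation a single leaf crosses every transversal infinitely often, so the pushforward of the transverse measure to the space of complete leaves is not the measure your formula requires. A further, smaller, problem is that for an open arc $\beta$ with interior or singular endpoints the quantity $i(\mathcal{F},\beta)$ is not controlled by the equivalence of the two foliations, which gives equality of heights only on closed curves and on cross-cuts.

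The classical repair---and what the paper's proof does, by citation to Marden--Strebel together with the remark about cross-cut strips---is to run your estimate separately on each piece of the decomposition rather than leaf by leaf globally. In a cylinder the vertical leaves are closed curves, so your argument applies verbatim with $\beta$ a closed vertical trajectory, the height inequality now being licit because $\beta$ represents a free homotopy class. In a spiral domain one replaces the complete leaf by a vertical segment of length $L$ closed up by a short horizontal arc, applies the height inequality to the resulting closed curve, and lets $L\to\infty$; the error terms vanish precisely because the total area is finite. On an arbitrary (non-parabolic) surface one adds the cross-cut strips, where the height is taken relative to cross-cuts, as you anticipated in your final paragraph. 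Since these three types of domains exhaust $X$ up to a set of zero $|\varphi|$-area, substituting this decomposition for your ``almost every leaf is proper'' claim makes the rest of your computation (discarding the tangential derivative, Cauchy--Schwarz along the sliced segments, and the length--area comparison) correct, and it then coincides with the argument the paper relies on.
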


\begin{proof}
This is proved for parabolic Riemann surfaces by Marden and Strebel \cite{MardenStrebel} (see also Strebel \cite[Theorem 24.2]{Strebel}). The argument uses the length-area method and a decomposition of the vertical foliation of $\varphi\in A(X)$ into cylindrical domains and spiral domains. The inequality is achieved for each domain separately. 

To prove the theorem for an arbitrary Riemann surface, we must consider the cross-cut strips (since the complement of the three types of domains in $X$ has zero area, see Strebel \cite[\S 13]{Strebel}). The argument is similar to the argument for the cylinders, and the theorem holds for arbitrary Riemann surfaces (see also \cite{Saric25}).
\end{proof}

Finally, there is an extremal metric for $EL(\mathcal{F})$ given by $$\rho_0(z)=\sqrt{|\varphi_{\mathcal{F}}(z)|}|dz|,$$ where $\varphi_{\mathcal{F}}$ is the finite area holomorphic quadratic differential whose horizontal foliation is homotopic to $\mathcal{F}$. By Theorem \ref{thm:mf=qd}, the holomorphic quadratic differential $\varphi_{\mathcal{F}}$ exists. 

\section{The convergence in $A(X)$}
The space of integrable holomorphic quadratic differentials is equipped with the topology induced by the $L^1$-norms. It is helpful to describe this topology in terms of sequential convergence.

\begin{lemma} 
\label{lem:L^1-conv}
A sequence $\varphi_n\in A(X)$ converges to $\varphi\in A(X)$ in the $L^1$-norm if and only if 
\begin{enumerate}
\item
$\varphi_n(z)$ converges to $\varphi (z)$ uniformly on compact subsets of $X$, and 
\item $\limsup_{n\to\infty}\int_X|\varphi_n|\leq \int_X|\varphi |$.
\end{enumerate}
\end{lemma}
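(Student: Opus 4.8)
The plan is to prove the two implications separately: the forward direction is complex-analytic, while the reverse direction is a Scheffé-type argument adapted to $A(X)$. For the forward direction, assume $\|\varphi_n-\varphi\|\to 0$. Then (2) is immediate, in fact with equality, from the reverse triangle inequality $\bigl|\,\|\varphi_n\|-\|\varphi\|\,\bigr|\le\|\varphi_n-\varphi\|$. For (1) I would use that in any holomorphic chart the coefficient $\varphi_n(z)-\varphi(z)$ is holomorphic, so $|\varphi_n(z)-\varphi(z)|$ is subharmonic and obeys the sub-mean-value inequality
$$
|\varphi_n(z_0)-\varphi(z_0)|\le \frac{1}{\pi r^2}\int_{D(z_0,r)}|\varphi_n-\varphi|\,dxdy .
$$
Given a compact $K\subset X$, there is a uniform radius $r>0$ so that each Euclidean coordinate disk $D(z_0,r)$ with $z_0\in K$ embeds in $X$; the right-hand side is then bounded by $(\pi r^2)^{-1}\|\varphi_n-\varphi\|$, and taking the supremum over $z_0\in K$ yields uniform convergence on $K$. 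That ``uniform on compacta'' is itself coordinate-independent is clear, since transition maps are holomorphic with bounded derivatives on compacta.

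The substantive direction is ``if''. Here I would fix the hyperbolic area measure $dA$ on $X$ and record each differential through its modulus density, writing $|\varphi|=H\,dA$, $|\varphi_n|=H_n\,dA$ and $|\varphi_n-\varphi|=D_n\,dA$, where $H,H_n,D_n\ge 0$ are genuine coordinate-independent functions on $X$ satisfying $\int_X H\,dA=\|\varphi\|$, $\int_X H_n\,dA=\|\varphi_n\|$ and $\int_X D_n\,dA=\|\varphi_n-\varphi\|$. Hypothesis (1) then gives $H_n\to H$ and $D_n\to 0$ pointwise on $X$. Applying Fatou's lemma to $H_n\to H$ gives $\|\varphi\|\le\liminf_n\|\varphi_n\|$, which together with (2) forces the full convergence $\|\varphi_n\|\to\|\varphi\|$ of the norms.

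With this in hand I would run the Scheffé cancellation. By the pointwise triangle inequality $D_n\le H_n+H$, so $g_n:=H_n+H-D_n\ge 0$, and $g_n\to 2H$ pointwise. Fatou applied to $g_n$ gives
$$
\int_X 2H\,dA\le\liminf_n\int_X g_n\,dA=2\|\varphi\|-\limsup_n\int_X D_n\,dA ,
$$
where I use $\int_X H_n\,dA\to\|\varphi\|$ and $\int_X H\,dA=\|\varphi\|$. Since $\varphi\in A(X)$ makes $\int_X H\,dA=\|\varphi\|$ finite, this subtracts to give $\limsup_n\|\varphi_n-\varphi\|\le 0$, hence $\|\varphi_n-\varphi\|\to 0$.

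The main obstacle, and the one point requiring genuine care, is that quadratic differentials are sections of a line bundle rather than functions on a fixed measure space, so one cannot naively speak of $\varphi_n\to\varphi$ ``as functions'' in order to invoke Scheffé. The resolution is that only the nonnegative densities $H_n$, $H$, $D_n$ ever enter the argument, and these are coordinate-independent precisely because the unimodular transition factors cancel in the moduli; thus Fatou on $(X,dA)$ applies verbatim. One must also check that the locally uniform convergence of the chart coefficients feeds the needed pointwise statements $H_n\to H$ and $D_n\to 0$, which is routine. I expect the reverse implication to be where the content lies; the forward implication is standard once the sub-mean-value estimate and a uniform injectivity radius on compacta are available.
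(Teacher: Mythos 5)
Your proposal is correct and takes essentially the same route as the paper: the forward direction via the Cauchy/mean-value estimate for the locally uniform convergence and the reverse triangle inequality for the norms, and the reverse direction via a Scheff\'e-type cancellation. The only cosmetic difference is that the paper applies dominated convergence once to $0\le|\varphi-\varphi_n|+|\varphi|-|\varphi_n|\le 2|\varphi|$, whereas you run the equivalent Fatou variant in two steps (first norm convergence, then the cancellation with $g_n=H_n+H-D_n$); your explicit handling of the modulus densities on $(X,dA)$ is a sound way to make precise what the paper leaves implicit.
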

\begin{proof}
To prove the only if direction, note that the inequality $0\leq |\varphi -\varphi_n|+|\varphi |-|\varphi_n|\leq 2|\varphi |$ allows us to apply Lebesgue's Dominated Convergence Theorem to $|\varphi -\varphi_n|+|\varphi |-|\varphi_n|$ which gives $0\leq \lim_{n\to\infty}\int_X|\varphi -\varphi_n|=\lim_{n\to\infty}\int_X|\varphi_n|-\int_X|\varphi |\leq 0$. For the if direction, the uniform convergence on compact sets follows from the Cauchy Theorem for holomorphic function and the inequality $||\varphi |-|\varphi_n||\leq |\varphi -\varphi_n|$ implies $\int_X|\varphi_n|\to\int_X|\varphi |$ as $n\to\infty$. 
\end{proof}

Using the above theorem, we give an example of a sequence of finite area holomorphic quadratic differentials with uniformly bounded $L^1$-norms that converge locally uniformly to a finite area holomorphic quadratic differential, but they do not converge in the $L^1$-norm.

\begin{figure}[h]
\begin{center}
{\centerline{\epsfig{file =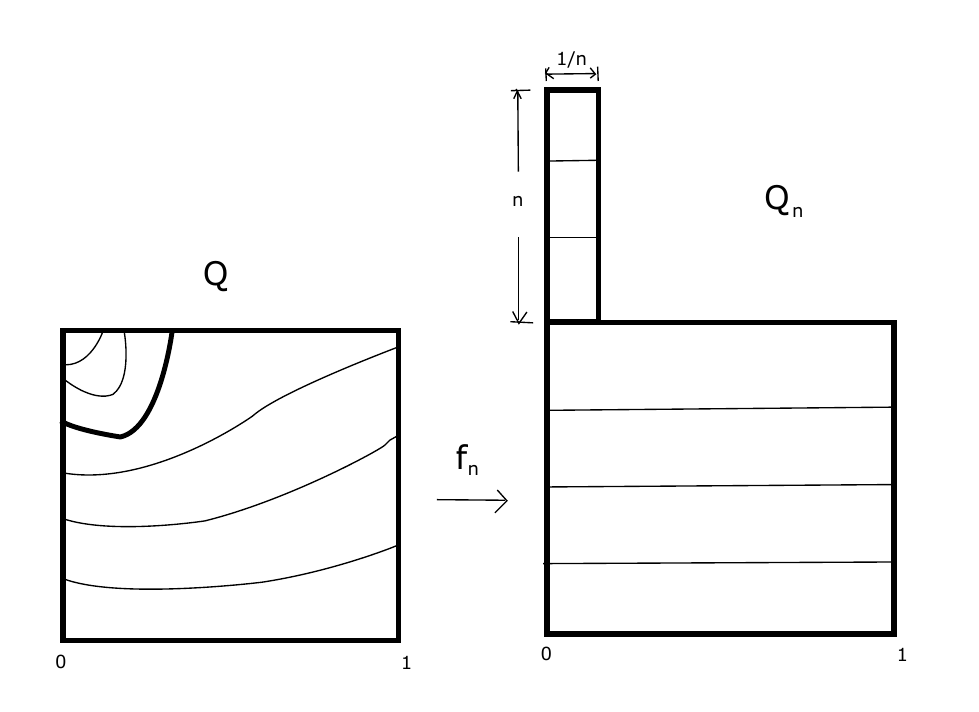,width=9.0cm,angle=0} }}
\vspace{-20pt}
\end{center}
\caption{The locally uniformly convergent sequence with bounded $L^1$-norms.}
\end{figure}

\begin{example}
Let $Q_n=[0,1]\times [0,1]\cup [0,\frac{1}{n}]\times [1,n]$ and $\psi_n(z)dz^2=dz^2$. Then $\|\psi_n\|:=\iint_{Q_n}dxdy=2$ for all $n$. Let $f_n$ be the conformal map from $Q=[0,1]\times [0,1]$ to $Q_n$ that fixes points $(0,0)$, $(1,0)$ and $(1,1)$.  Since $\cap_{n\in\mathbb{N}} Q_n=Q$, the Carath\'eodory kernel theorem implies that $f_n\to id$ uniformly on compact subsets of $Q$. Let $\varphi_n(z)dz^2=(f_n)^*\psi_n(z)dz^2=(f_n')^2(z)dz^2$. Since $f_n(z)\to z$ as $n\to\infty$, we conclude that $\varphi_n(z)\to 1$ uniformly on compact sets (see Figure 1). However, $\|\varphi_n\|=\|\psi_n\|=2$ does not converge to $\| dz^2\|=1$. By Lemma \ref{lem:L^1-conv}, we have that $\varphi_n$ does not converge in the $L^1$-norm to $dz^2$ even though it converges uniformly on compact subsets and the norms are uniformly bounded.

If we attach $[0,\frac{1}{n}]\times [1,\sqrt{n}]$ to $Q$, then the above procedure provides a sequence of differentials that converge in the $L^1$-norm to $dz^2$.
\end{example}

\section{The intersection numbers for measured laminations on an arbitrary Riemann surface}

Let $\mu$ and $\nu$ be two measured (geodesic) laminations on an arbitrary Riemann surface $X=\mathbb{H}/\Gamma$, where $\Gamma <PSL_2(\mathbb{R})$. 
Let $|\mu |$ and $|\nu |$ be the geodesic laminations supports of $\mu$ and $\nu$. The set $T$ of all transverse intersections between $|\mu |$ and $|\nu |$ may not be closed on a surface with infinite hyperbolic area. We consider the closure $\bar{T}$ of the set $T$ of transverse intersections. A point in $\bar{T}\setminus T$ lies on a common geodesic of $|\mu |$ and $|\nu |$.

Let $P\in \bar{T}$ be an arbitrary point. It is elementary to find an open neighborhood $U_P$ of $P$ such that the set of components of $|\mu |\cap U_P$ is nested in the sense that each component separates the set into two parts, except the leftmost and the rightmost component. We assume that the components of $|\nu |\cap U_P$ have the same property by choosing $U_P$ even smaller, if necessary. If $P\in T$, since $P\in U_P$ is in the intersection of two components, by decreasing $U_P$ even further, we can arrange that each pair of components of $|\mu |\cap U_P$ and $|\nu |\cap U_P$ intersects in one point. If $P\in\bar{T}\setminus T$, a pair of components of $|\mu |\cap U_P$ and $|\nu |\cap U_P$ can coincide, be disjoint, or transversely intersect in one point. The leftmost or the rightmost component of $U_p\cap |\mu |$ can be an atom of the transverse measure, and the same holds for atoms of $\nu$ (see Figure 2).

\begin{figure}[h]
\begin{center}
{\centerline{\epsfig{file =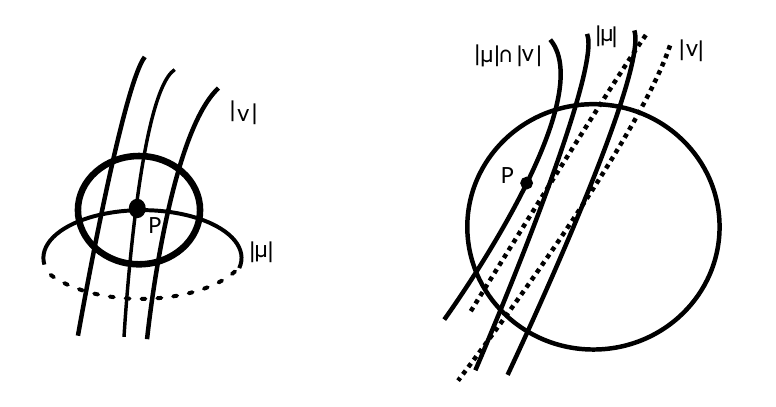,width=9.0cm,angle=0} }}
\vspace{-20pt}
\end{center}
\caption{On the left side, $|\mu |$ has a closed leaf, i.e., $\mu$ has an atom at $P$. On the right side, $|\mu |\cap |\nu |$ has a common leaf through $P$.}
\end{figure}

We denote by $R_i$ the subregion of $U_{P_i}$ that lies between the leftmost and the rightmost components of $|\mu |\cap U_{P_i}$, and the leftmost and the rightmost components of $|\nu |\cap U_{P_i}$. The subregion $R_i$ can be closed, meaning it contains all its boundary components, or it can be open, or it can contain parts of its boundary. The regions $R_i$ are called quadrilaterals, two boundary sides $|\mu |\cap R_i$ are called {\it horizontal}, and the other two sides $|\nu |\cap R_i$ are called {\it vertical}. We will use the quadrilaterals $R_i$ that contain any of their vertical or horizontal sides, or even none.

We show that there exists a countable, locally finite collection of the quadrilaterals $\{ Q_i\}_i$ covering $\bar{T}$ such that $Q_i\cap Q_j=\emptyset$ for all $i\neq j$. To prove this, 
fix a point $x_0\in X$ and define $K_n=\{ x\in X:dist (x_0,x)\leq n\}$. Then $\{ K_n\}_n$ is an increasing exhaustion of $X$ by compact sets. We choose all sets $U_P$ to have diameter less than $1$ which implies that if $U_P$ intersects $K_n$ then it cannot intersect $K_{m}\setminus K_{n+1}$ for all $m\geq n+1$. The quadrilaterals $\bar{R}_i$ constructed above from $U_{P_i}$ have the same intersection property with respect to the exhaustion $\{ K_n\}$. The choice for $\bar{R}_i$ is that it contains all of its boundary sides, and $\bar{R}_i$ contains all points of the intersections $|\mu |\cap |\nu |$ that lie in $U_{P_i}$. 

We note that if $\bar{R}_i$ and $\bar{R}_j$ intersect, then we can partition $\bar{R}_i\cup\bar{R}_j$ into finitely many quadrilaterals $\{ R_1^{i,j},\ldots ,R_k^{i,j}\}$ that are pairwise mutually disjoint. The quadrilaterals are not closed; in general, some of their boundary sides are open.

We consider the compact set $K_2\cap \bar{T}$. We cover it by the sets $U_P$ for $P\in K_2\cap \bar{T}$ and choose a finite subcover. The quadrilaterals (constructed above) inside the finite subcover by $U_P$ also cover $K_2\cap \bar{T}$. Then we make the above partition for the intersecting quadrilaterals and obtain another finite cover by pairwise disjoint quadrilaterals. Assume now that we are given a finite subcover of $K_n\cap\bar{T}$ by quadrilaterals that are pairwise disjoint. Let $cl(K_{n+1}\setminus K_{n})$ be the closure of $K_{n+1}\setminus K_{n}$.  We cover $cl(K_{n+1}\setminus K_{n})\cap\bar{T}$ by finitely many sets $U_P$ and add quadrilaterals corresponding to these sets to the covering of $K_n\cap \bar{T}$. The finite cover of $K_{n+1}\cap\bar{T}$ is not pairwise disjoint. However, the part of the cover which intersects $K_{n-1}\cap\bar{T}$ is disjoint from the added quadrilaterals by the choice of the diameter, and it is disjoint from the cover of $K_{n}\cap\bar{T}$ by the assumption. The only intersections occur for sets that cover $cl(K_{n+1}\setminus K_{n-1})\cap\bar{T}$. We again partition these sets into pairwise disjoint quadrilaterals. Thus, we obtain a disjoint cover of $\bar{T}\cap K_{n+1}$. It is important to note that when proceeding for all $m$, the part that covers $K_n\cap\bar{T}$ does not change for $m\geq n+2$. Therefore, the quadrilaterals stabilize at each compact set, and we obtain a locally finite cover of $\bar{T}$ by quadrilaterals $\{ Q_k\}$ that are pairwise disjoint. 

We define the {\it intersection number} between $\mu$ and $\nu$ by
$$
i(\mu ,\nu )=\sum_k\mu\times\nu (Q_k),
$$
where $\mu\times\nu (Q_k)$ is the product of the total transverse $\mu$-measure of the segments of $|\mu |\cap Q_k$ that transversely intersect $|\nu |\cap Q_k$, and the $\nu$-measure of the corresponding set for $\nu$.

We also note that the intersection number is independent of the choice of a local cover since any two covers with the above properties have a common refinement, and $\mu$ and $\nu$ are additive.

\section{The intersection numbers and extremal lengths}

Let $\mathcal{F}$ and $\mathcal{G}$ be two proper, integrable measured foliations on a Riemann surface $X=\mathbb{H}/\Gamma$. Let $\varphi$ and $\psi$ be the unique finite-area holomorphic quadratic differentials on $X$ whose horizontal foliations are homotopic to $\mathcal{F}$ and $\mathcal{G}$, respectively. They exist by Theorem \ref{thm:mf=qd}. We extend an inequality on compact surfaces obtained by Minsky \cite{Minsky} to arbitrary Riemann surfaces.  

\begin{proposition}
\label{thm:inters-extremal_l}
Under the above notation, we have
$$
i(\mathcal{F},\mathcal{G})^2\leq\int_X|\varphi |\int_X|\psi |.
$$
\end{proposition}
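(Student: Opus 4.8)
The plan is to write the intersection number as an integral over $X$ and then apply the Cauchy--Schwarz inequality, the finiteness of the two $L^1$-norms guaranteeing integrability throughout. Away from the isolated zeros of $\varphi$ and $\psi$, the horizontal foliations $\mathcal{F}$ and $\mathcal{G}$ are carried by the closed transverse $1$-forms $\omega_\varphi=\mathrm{Im}(\sqrt{\varphi}\,dz)$ and $\omega_\psi=\mathrm{Im}(\sqrt{\psi}\,dz)$, whose absolute values are the transverse measures of $\mathcal{F}$ and $\mathcal{G}$. The exterior product $|\omega_\varphi\wedge\omega_\psi|$ is then a globally well-defined, nonnegative area density on $X$ (the sign ambiguities in $\omega_\varphi$ and $\omega_\psi$ being irrelevant), and it vanishes exactly where the two foliations are tangent. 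Since the intersection number is a homotopy invariant, I would compute it using the foliations $\mathcal{F}$ and $\mathcal{G}$ themselves rather than their geodesic representatives, so that $i(\mathcal{F},\mathcal{G})=i(\mu_\varphi,\mu_\psi)$.

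The core of the argument is the inequality
$$
i(\mathcal{F},\mathcal{G})\le\int_X\bigl|\mathrm{Im}(\sqrt{\varphi}\,dz)\wedge\mathrm{Im}(\sqrt{\psi}\,dz)\bigr|.
$$
To prove it I would rerun the construction of \S5 with the leaves of $\mathcal{F}$ and $\mathcal{G}$ in place of the geodesics of $|\mu|$ and $|\nu|$: cover the closure of the transverse-intersection set by a locally finite family of pairwise disjoint quadrilaterals $Q_k$ adapted to the product structures of $\mathcal{F}$ and $\mathcal{G}$. On each $Q_k$ the transverse-measure functions $s=\int\omega_\varphi$ and $t=\int\omega_\psi$ serve as product coordinates, and the change of variables $(x,y)\mapsto(s,t)$ has Jacobian $|\omega_\varphi\wedge\omega_\psi|$; hence the contribution $\mu_\varphi\times\mu_\psi(Q_k)$ equals $\int_{Q_k}|\omega_\varphi\wedge\omega_\psi|$, and summing over the locally finite cover shows that the total transverse intersection measure of the representatives $\mathcal{F}$ and $\mathcal{G}$ is exactly this integral. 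Because the geodesic laminations $\mu_\varphi,\mu_\psi$ lie in minimal position (distinct leaves meet at most once and bound no bigons), the combinatorial intersection number of \S5 realizes the infimum over all representatives, so it is bounded above by the transverse intersection measure of the particular representatives $\mathcal{F}$ and $\mathcal{G}$; this yields the displayed inequality. The zeros of $\varphi$ and $\psi$ form a measure-zero set and are harmless, while on the coincidence set $\bar T\setminus T$ the leaves are common and $\omega_\varphi\wedge\omega_\psi=0$, consistent with the absence of transverse crossings there.

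I expect this first step to be the main obstacle, since it requires carefully matching the combinatorial definition of \S5 to the analytic product structure and controlling the cover on a surface of infinite hyperbolic area; the remainder is a direct computation. Passing to natural coordinates for $\varphi$, where $\varphi=dz^2$ and $z=x+iy$, one has $\omega_\varphi=dy$, and writing $\sqrt{\psi}=\alpha+i\beta$ gives $\omega_\psi=\beta\,dx+\alpha\,dy$, so that $|\omega_\varphi\wedge\omega_\psi|=|\beta|\,dx\,dy$, while $|\varphi|=dx\,dy$ and $|\psi|=(\alpha^2+\beta^2)\,dx\,dy$. Since $dx\,dy$ is a finite measure of total mass $\int_X|\varphi|<\infty$, the Cauchy--Schwarz inequality yields
$$
i(\mathcal{F},\mathcal{G})\le\int_X|\beta|\,dx\,dy\le\Bigl(\int_X dx\,dy\Bigr)^{1/2}\Bigl(\int_X\beta^2\,dx\,dy\Bigr)^{1/2}\le\Bigl(\int_X|\varphi|\Bigr)^{1/2}\Bigl(\int_X|\psi|\Bigr)^{1/2},
$$
using $\beta^2\le\alpha^2+\beta^2$ in the last step. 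Squaring gives $i(\mathcal{F},\mathcal{G})^2\le\int_X|\varphi|\int_X|\psi|$, as required.
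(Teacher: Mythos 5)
Your second half is fine: the pointwise identity $|\omega_\varphi\wedge\omega_\psi|=|\beta|\,dx\,dy\le|\varphi|^{1/2}|\psi|^{1/2}\,dx\,dy$ in natural coordinates, followed by the integral Cauchy--Schwarz inequality, is exactly the endgame of Minsky's Lemma 5.1, which the paper invokes as a black box. The genuine gap is the step you yourself flag as the main obstacle, namely $i(\mu_\varphi,\mu_\psi)\le\int_X|\omega_\varphi\wedge\omega_\psi|$, and the justifications you offer for it do not exist in this setting. You appeal to homotopy invariance of the intersection number, to the geodesic laminations being in ``minimal position'' (no bigons), and to the \S 5 number ``realizing the infimum over all representatives.'' All three are compact-surface facts. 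On an infinite Riemann surface the intersection number of \S 5 is \emph{defined} directly on the geodesic straightenings via a locally finite disjoint cover of $\bar{T}$ by quadrilaterals; neither the paper nor the literature it cites establishes, in this generality, that this number is an infimum over homotopic representatives or that it is computed by an arbitrary foliated representative. Your transfer of the \S 5 construction to the foliations themselves also fails as stated: for $\mathcal{F}$ and $\mathcal{G}$ the set of transverse leaf crossings is typically of full measure in $X$ rather than a closed, nested, laminated set, so the disjoint-quadrilateral cover of \S 5 (which relies on the nesting of lamination leaves in small neighborhoods) cannot be built; and within a product chart a single pair of leaves may cross several times, so your Jacobian computation yields the crossing mass \emph{with multiplicity} --- acceptable for an upper bound, but not the equality you assert, and comparing it with the once-per-pair geodesic count still requires an argument (e.g., lifting to $\mathbb{H}$, noting that transversally crossing geodesics of $|\mu_\varphi|$ and $|\mu_\psi|$ have linked endpoints on $\partial_\infty\mathbb{H}$, hence the corresponding proper leaf-arcs of \cite{MardenStrebel1} must cross at least once, with the transverse measures matching under the push-forward defining $\mu_\varphi$). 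That lemma is plausibly provable, but it is missing, and it is the whole content of the first inequality.

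It is worth seeing how the paper sidesteps exactly this difficulty: it never runs the wedge argument on the infinite surface. Instead it exhausts $X$ by finite-area geodesic subsurfaces $X_n$, restricts the foliations and erases boundary-parallel leaves, doubles to get surfaces of finite type where Minsky's Lemma 5.1 applies verbatim (minimal position and homotopy invariance being classical there), halves by the reflection symmetry, proves $i(\mu_{\mathcal{F}_n},\mu_{\mathcal{G}_n})\to i(\mu_{\mathcal{F}},\mu_{\mathcal{G}})$ through the quadrilateral cover, and controls the right-hand side in the limit by the Dirichlet principle (Theorem \ref{thm:dir_principle}): $\int_{X_n}|\varphi_n|\le D_{X_n}(\mathcal{F}_n)\le D_X(\mathcal{F})=\int_X|\varphi|$; the second-kind and $X=\mathbb{D}$ cases are then handled by doubling across the ideal boundary. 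If you supplied the linking lemma and the measure bookkeeping sketched above, your route would give a more direct, exhaustion-free proof; as written, however, the key inequality is asserted rather than proved.
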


\begin{proof}
Assume that $X=\mathbb{H}/\Gamma$ with $\Gamma$ of the first kind. 
Let $X_n$ be a subsurface of $X$ of finite area obtained by cutting $X$ along simple closed curves as in \cite[\S 3]{Saric24}. Denote by $\mathcal{F}_n$ and $\mathcal{G}_n$ the partial foliation given by restricting $\mathcal{F}$ and $\mathcal{G}$ to $X_n$ and erasing leaves that can be homotoped into the boundary. Denote by $\widehat{\mathcal{F}}_n$ and $\widehat{\mathcal{G}}_n$ the double of these foliation on the double Riemann surface $\widehat{X}_n$. Let $\widehat{\varphi}_n$ and $\widehat{\psi}_n$ be the finite-area holomorphic quadratic differentials on $\widehat{X}_n$ whose horizontal foliation are homotopic to $\widehat{\mathcal{F}}_n$ and $\widehat{\mathcal{G}}_n$, respectively. The restrictions $\varphi_n=
\widehat{\varphi}_n|_{X_n}$ and $\psi_n=
\widehat{\psi}_n|_{X_n}$ converge to $\varphi$ and $\psi$ uniformly on compact subsets of $X$(see \cite[\S 3]{Saric24}). 

By a result of Minsky \cite[Lemma 5.1]{Minsky} we have the inequality
\begin{equation}
\label{eq:ineq-int-ext}
i(\widehat{\mathcal{F}}_n,\widehat{\mathcal{G}}_n)^2\leq\int_{\widehat{X}_n}|\widehat{\varphi}_n |\int_{\widehat{X}_n}|\widehat{\psi}_n |.
\end{equation}
Since $\widehat{\mathcal{F}}_n$, $\widehat{\mathcal{G}}_n$, $\widehat{\varphi}_n$, and $\widehat{\psi}_n$ are invariant under the symmetry reflection across the boundary curves of $X_n$, we have 
$i(\widehat{\mathcal{F}}_n,\widehat{\mathcal{G}}_n)=2i({\mathcal{F}}_n,{\mathcal{G}}_n)$, $\int_{\widehat{X}_n}|\widehat{\varphi}_n|=2\int_{X_n}|\varphi_n |$ and $\int_{\widehat{X}_n}|\widehat{\psi}_n|=2\int_{X_n}|\psi_n |$. Therefore we get
\begin{equation}
\label{eq:ineq-int-restri}
i({\mathcal{F}}_n,{\mathcal{G}}_n)^2\leq\int_{{X}_n}|{\varphi}_n |\int_{{X}_n}|{\psi}_n |.
\end{equation}

The measured foliations $\mathcal{F}_n$ and $\mathcal{G}_n$ are obtained by restricting $\mathcal{F}$ and $\mathcal{G}$ to $X_n$ and erasing the leaves homotopic to the boundary. Denote by $\mu_{\mathcal{F}}$ the measured geodesic lamination, which is homotopic to $\mathcal{F}$, and we use the same notation for all other induced measured lamination from foliations.

The intersection number $i(\mu_{\mathcal{F}},\mu_{\mathcal{G}})$ is defined locally by the product of the transverse measures $\mu_{\mathcal{F}}$ and $\mu_{\mathcal{G}}$ in quadrilaterals, where the leaves of $\mu_{\mathcal{F}}$ connect the vertical sides and the leaves of $\mu_{\mathcal{G}}$ connect the horizontal sides. We partition $X$ into such quadrilaterals $\{ Q_k\}_{k=1}^{\infty}$ and compute the intersection number
$$
i(\mu_{\mathcal{F}},\mu_{\mathcal{G}})=\sum_{k=1}^{\infty} \mu_{\mathcal{F}}(\partial_vQ_k)\mu_{\mathcal{G}}(\partial_hQ_k),
$$
where $\partial_vQ_k$ is one of the two vertical sides of $Q_k$ with $\mu_{\mathcal{F}}(\partial_vQ_k)$ the transverse measure, and $\partial_hQ_h$  is one of the two horizontal sides of $Q_k$ with $\mu_{\mathcal{G}}(\partial_hQ_k)$ the transverse measure. We note that the intersection number is independent of the choice of quadrilaterals. Indeed, given two locally finite partitions, we can obtain a locally finite refinement partition because an intersection of two quadilaterals is a quadrilateral, and the complement of the intersection is a finite union of quadrilaterals. 

If $i(\mu_{\mathcal{F}},\mu_{\mathcal{G}})<\infty$, then given any $\epsilon >0$ there exists $k_0=k_0(\epsilon )$ such that
$i(\mu_{\mathcal{F}},\mu_{\mathcal{G}})<\sum_{i=1}^{k_0} \mu_{\mathcal{F}}(\partial_vQ_i)\mu_{\mathcal{G}}(\partial_hQ_i)+\epsilon$. There exists $n_0=n_0(k_0)$ such that all the leaves of supports of $\mu_{\mathcal{F}}$ and $\mu_{\mathcal{G}}$ that intersect in the quadrilateral $\{ Q_k\}_{k=1}^{k_0}$ are not erased in $X_n$ for all $n\geq n_0$. Therefore, they are in supports of $\mu_{\mathcal{F}_{n_0}}$ and $\mu_{\mathcal{G}_{n_0}}$ and they contribute to the intersection $i( \mu_{\mathcal{F}_{n_0}}, \mu_{\mathcal{G}_{n_0}})$. Therefore $|i(\mu_{\mathcal{F}},\mu_{\mathcal{G}})- i( \mu_{\mathcal{F}_{n}}, \mu_{\mathcal{G}_{n}})|<\epsilon $ for all $n\geq n_0$. We conclude that $\lim_{n\to\infty} i( \mu_{\mathcal{F}_{n}}, \mu_{\mathcal{G}_{n}})=i(\mu_{\mathcal{F}},\mu_{\mathcal{G}})$. If $i(\mu_{\mathcal{F}},\mu_{\mathcal{G}})=\infty$, a similar argument gives that $\lim_{n\to\infty} i( \mu_{\mathcal{F}_{n}}, \mu_{\mathcal{G}_{n}})=\infty$. 

We consider $\limsup_{n\to\infty}\int_{X_n}|\varphi_n|$. By \cite[Lemma 3.7]{Saric24}, there exists a subsequence $\varphi_{n_k}$ of $\varphi_n$ that converges to $\varphi$ uniformly on compact subsets of $X$. By Theorem \ref{thm:dir_principle}
$$
\int_{X_n}|\varphi_n|\leq D_{X_n}(\mathcal{F}_n).
$$
Note that $D_{X_n}(\mathcal{F}_n)\leq D_X(\mathcal{F})$ and $D_X(\mathcal{F})=\int_X|\varphi |$, and similar properties hold for $\mathcal{G}$ and $\psi$. Therefore, by letting $n$ go to infinity in (\ref{eq:ineq-int-restri}), we get
\begin{equation*}
\begin{split}
i(\mu_{\varphi},\mu_{\psi})^2=\limsup_{n\to\infty} i(\mu_{\varphi_n},\mu_{\psi_n})^2\leq (\limsup_{n\to\infty} \int_{X_n}|\varphi_n|)\cdot (\limsup_{n\to\infty} \int_{X_n}|\psi_n|)\\  \leq (\limsup_{n\to\infty} D(\mathcal{F}_n)) \cdot (\limsup_{n\to\infty}  D(\mathcal{G}_n))\leq  D(\mathcal{F}) \cdot D(\mathcal{G})= (\int_{X}|\varphi |)\cdot (\int_{X}|\psi |)
\end{split}
\end{equation*}
which proves the desired inequality.

When $\Gamma$ is not of the first kind, we use \cite[Theorem 1.1]{Saric25} and the doubling procedure over the ideal boundary to reduce to the case of surfaces with the Fuchsian group of the first kind. Then, we apply the statement of the theorem to obtain the desired inequality on the double. The statement holds on the original surface by the symmetry with respect to the reflection. This method works for $X=\mathbb{D}$ as well by adding punctures along the unit circle after the doubling and noting that the intersection numbers converge (as in the proof of  \cite[Theorem 1.1]{Saric25}).
\end{proof}

The intersection of two arbitrary measured laminations on an infinite Riemann surface $X$ is not finite, in general. 
A corollary to the above theorem is that the intersection number of two measured geodesic laminations induced by the horizontal foliations of two finite-area holomorphic quadratic differentials is finite. This is not obvious since the leaves of the measured laminations can have intersection points going to infinity of $X$.

\begin{corollary}
\label{cor:finite-int-num}
Let $\varphi ,\psi\in A(X)$ and let $\mu_{\varphi}$, $\mu_{\psi}$ be measured laminations on $X$ which are homotopic to the horizontal foliations of $\varphi$ ,$\psi$. Then
$$
i(\mu_{\varphi},\mu_{\psi})<\infty .
$$
\end{corollary}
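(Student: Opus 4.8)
The plan is to recognize this as an essentially immediate consequence of Proposition \ref{thm:inters-extremal_l}, so the main work is just matching up the objects correctly. Given $\varphi ,\psi\in A(X)$, I would first recall from the paragraph preceding Theorem \ref{thm:mf=qd} that the horizontal foliations $\mathcal{F}_{\varphi}$ and $\mathcal{F}_{\psi}$, given by $Im(\sqrt{\varphi}\,dz)$ and $Im(\sqrt{\psi}\,dz)$, are proper, integrable partial measured foliations on $X$. Hence they fall exactly under the hypotheses of Proposition \ref{thm:inters-extremal_l}. Moreover, by the uniqueness clause of Theorem \ref{thm:mf=qd}, the unique finite-area holomorphic quadratic differentials whose horizontal foliations are homotopic to $\mathcal{F}_{\varphi}$ and $\mathcal{F}_{\psi}$ are $\varphi$ and $\psi$ themselves.

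Next I would make explicit the identification between the foliation and lamination intersection numbers. The measured geodesic laminations $\mu_{\varphi}$ and $\mu_{\psi}$ are defined by pushing forward the transverse measures of $\mathcal{F}_{\varphi}$ and $\mathcal{F}_{\psi}$ along the homotopies taking each regular leaf to its homotopic simple geodesic; thus $\mu_{\varphi}$ is homotopic to $\mathcal{F}_{\varphi}$ and $\mu_{\psi}$ to $\mathcal{F}_{\psi}$. Since the intersection number is a homotopy invariant of the transverse measures (this is the content of the local quadrilateral computation in Section 6), we have $i(\mu_{\varphi},\mu_{\psi})=i(\mathcal{F}_{\varphi},\mathcal{F}_{\psi})$. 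Applying Proposition \ref{thm:inters-extremal_l} with $\mathcal{F}=\mathcal{F}_{\varphi}$ and $\mathcal{G}=\mathcal{F}_{\psi}$ then yields
$$
i(\mu_{\varphi},\mu_{\psi})^{2}=i(\mathcal{F}_{\varphi},\mathcal{F}_{\psi})^{2}\leq\int_X|\varphi|\int_X|\psi|.
$$
Finally, because $\varphi ,\psi\in A(X)$ have finite $L^1$-norm by definition, the right-hand side equals $\|\varphi\|\cdot\|\psi\|<\infty$, and therefore $i(\mu_{\varphi},\mu_{\psi})<\infty$, as claimed.

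There is no genuine obstacle here; the corollary is a direct translation of Proposition \ref{thm:inters-extremal_l} into the language of the foliations induced by $\varphi$ and $\psi$. The only point requiring a word of justification is the invariance step $i(\mu_{\varphi},\mu_{\psi})=i(\mathcal{F}_{\varphi},\mathcal{F}_{\psi})$, which rests on the fact that the intersection number of Section 5 depends only on the transverse-measure data up to the homotopy used to define equivalence of proper partial foliations. All the analytic content -- finiteness of the bounding integrals and the Minsky-type inequality that produces the bound -- has already been established in the proposition.
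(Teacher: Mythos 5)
Your proposal is correct and matches the paper's argument: the corollary is stated in the paper as a direct consequence of Proposition \ref{thm:inters-extremal_l}, whose bound $i(\mu_{\varphi},\mu_{\psi})^{2}\leq\int_X|\varphi|\int_X|\psi|<\infty$ is exactly what you invoke. Your added care in identifying $i(\mu_{\varphi},\mu_{\psi})$ with $i(\mathcal{F}_{\varphi},\mathcal{F}_{\psi})$ is consistent with how the paper itself passes between foliations and their induced laminations in the proof of that proposition.
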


\vskip .2 cm

Next, we prove the continuity of the intersection number.

\begin{theorem}
\label{thm:int_numb_cont}
Let $X=\mathbb{H}/\Gamma$ be an arbitrary Riemann surface. Let $\varphi ,\varphi_n,\psi ,\psi_n\in A(X)$ and $\mathcal{F},\mathcal{F}_n,\mathcal{G},\mathcal{G}_n$ be the corresponding horizontal foliations. If $\int_X|\varphi -\varphi_n|\to 0$ and $\int_X|\psi -\psi_n|\to 0$ as $n\to\infty$, then
$$
\lim_{n\to\infty}i(\mu_{\mathcal{F}_n},\mu_{\mathcal{G}_n})=i(\mu_{\mathcal{F}},\mu_{\mathcal{G}}).
$$
\end{theorem}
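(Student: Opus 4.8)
The plan is to establish $\lim_{n}i(\mu_{\mathcal{F}_n},\mu_{\mathcal{G}_n})=i(\mu_{\mathcal{F}},\mu_{\mathcal{G}})$ by splitting each intersection number into a compact core and a tail, controlling the tails uniformly and passing to the limit on the core. All the intersection numbers involved are finite by Corollary \ref{cor:finite-int-num}, so the splitting is legitimate. Two standing consequences of the hypotheses will be used throughout: by Lemma \ref{lem:L^1-conv} the convergence $\int_X|\varphi-\varphi_n|\to 0$ forces $\varphi_n\to\varphi$ uniformly on compact subsets of $X$ (and likewise $\psi_n\to\psi$), while for any measurable $K\subset X$ the triangle inequality gives $\int_{X\setminus K}|\varphi_n|\le \int_{X\setminus K}|\varphi|+\int_X|\varphi-\varphi_n|$.

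Fix $\epsilon>0$. Since $\varphi$ and $\psi$ are integrable, I would choose a compact set $K\subset X$ with $\int_{X\setminus K}|\varphi|<\epsilon$ and $\int_{X\setminus K}|\psi|<\epsilon$; by the inequality above, $\int_{X\setminus K}|\varphi_n|<2\epsilon$ and $\int_{X\setminus K}|\psi_n|<2\epsilon$ for all large $n$. To bound the intersection mass carried outside $K$, I would use a localized form of Proposition \ref{thm:inters-extremal_l}: the estimate in that proposition is local in nature, since in flat coordinates the transverse measures are $Im\sqrt{\varphi}$ and $Im\sqrt{\psi}$, and a pointwise Cauchy--Schwarz inequality together with the integral Cauchy--Schwarz gives, for any measurable $\Omega\subset X$, the bound $\int_\Omega|Im\sqrt{\varphi}\wedge Im\sqrt{\psi}|\le(\int_\Omega|\varphi|)^{1/2}(\int_\Omega|\psi|)^{1/2}$. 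Since straightening foliation leaves to geodesics only destroys intersections, the part $i_\Omega(\mu_{\mathcal{F}},\mu_{\mathcal{G}})$ of the intersection number coming from quadrilaterals in $\Omega$ is bounded by this integral, so $i_\Omega(\mu_{\mathcal{F}},\mu_{\mathcal{G}})^2\le(\int_\Omega|\varphi|)(\int_\Omega|\psi|)$. Applied on $\Omega=X\setminus K$ to both $(\varphi_n,\psi_n)$ and $(\varphi,\psi)$, this bounds the approximating tail by $2\epsilon$ and the limiting tail by $\epsilon$.

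On the compact core I would prove $i_K(\mu_{\mathcal{F}_n},\mu_{\mathcal{G}_n})\to i_K(\mu_{\mathcal{F}},\mu_{\mathcal{G}})$. Away from the finitely many zeros of $\varphi$ inside $K$, uniform convergence $\varphi_n\to\varphi$ makes the natural parameters $\int\sqrt{\varphi_n}$ converge uniformly to $\int\sqrt{\varphi}$, so the horizontal leaves and their transverse measures converge, and likewise for $\psi_n$. Consequently the geodesic laminations $\mu_{\mathcal{F}_n},\mu_{\mathcal{G}_n}$ converge to $\mu_{\mathcal{F}},\mu_{\mathcal{G}}$ as currents over $K$, and since $K$ is compact the core intersection number is a finite sum of products of transverse measures over quadrilaterals and passes to the limit, provided $K$ is chosen so that no intersection mass sits on $\partial K$ and shared leaves, tangencies, and atoms are treated as measure-zero exceptions. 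Combining the core convergence with the two tail bounds gives $|i(\mu_{\mathcal{F}_n},\mu_{\mathcal{G}_n})-i(\mu_{\mathcal{F}},\mu_{\mathcal{G}})|<4\epsilon$ for all large $n$, and letting $\epsilon\to 0$ proves the theorem.

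The main obstacle is the core convergence. Making precise the sense in which $\mu_{\mathcal{F}_n}$ converges to $\mu_{\mathcal{F}}$, and that this convergence is compatible with the quadrilateral computation of the intersection number, requires a genuine analysis of the trajectories of the approximating differentials: one must rule out that a leaf meeting $K$ for $\varphi_n$ fails to track the corresponding leaf for $\varphi$ because it escapes through the ends of $X$. This is exactly where the three cases $X=\mathbb{H}$, $\Gamma$ of the first kind, and $\Gamma$ of the second kind must be separated, since a trajectory leaving a compact set returns, recurs, or limits to the ideal boundary in qualitatively different ways in each case. As in the proof of Proposition \ref{thm:inters-extremal_l}, the doubling constructions of \cite{Saric25} should reduce the second-kind case (and $X=\mathbb{D}$) to the first-kind case, leaving the first-kind and universal trajectory analysis as the crux.
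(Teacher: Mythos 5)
Your skeleton (compact core plus tails, tails via Proposition \ref{thm:inters-extremal_l}, three cases by the kind of $\Gamma$) matches the paper's, but both load-bearing steps are missing, and one justification is false as written. The tail bound is the first problem: you claim $i_\Omega(\mu_{\mathcal{F}},\mu_{\mathcal{G}})^2\le(\int_\Omega|\varphi|)(\int_\Omega|\psi|)$ for $\Omega=X\setminus K$ because ``straightening foliation leaves to geodesics only destroys intersections.'' That is true globally but fails after localization: a transverse crossing of two \emph{geodesics} occurring outside $K$ can arise from leaves whose actual crossing, and indeed most of whose mass, lies inside $K$, and conversely a trajectory and its straightened geodesic $G(h)$ can occupy quite different regions of $X$. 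Aligning the two localizations is precisely the content of the paper's Lemma \ref{lem:compact_leaves} (a statement \emph{uniform} over the sequence $\varphi_n$ in the disk, proved with Strebel's length estimates in the $\|\varphi_n\|$-metric \cite{Strebel-geom-u-d}) and Lemma \ref{lem:compact_leaves_surface} (non-uniform, for $\Gamma$ of the first kind). Moreover the paper does not bound the tail by a pointwise wedge-product estimate: it \emph{erases} all leaves meeting $K$, observes the remaining partial foliation $\mathcal{C}_n$ has Dirichlet integral $<\epsilon$, realizes it by a holomorphic quadratic differential via \cite[Theorem 1.1]{Saric25}, and only then applies the Dirichlet principle (Theorem \ref{thm:dir_principle}) together with Proposition \ref{thm:inters-extremal_l}. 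Without the two lemmas, your tail and the core are not complementary sets of intersections, so the $4\epsilon$ bookkeeping does not close.

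The second problem is the core convergence, which you explicitly defer --- but that \emph{is} the theorem. Locally uniform convergence of the natural parameters only controls leaves through compact pieces; the straightened geodesic of a leaf depends on global data (its endpoints/homotopy class), so ``$\mu_{\mathcal{F}_n}\to\mu_{\mathcal{F}}$ as currents over $K$'' does not follow, and even if it did, weak convergence of measured laminations does not imply convergence of intersection numbers without transversality control (shared leaves, atoms, near-tangencies carry positive mass in the limit). The paper resolves this by two genuinely different mechanisms. In $\mathbb{D}$ it decomposes the relevant trajectories into finitely many strips bounded by \emph{totally regular} leaves and invokes Strebel's theorem \cite[Theorem 4.1]{Strebel-geom-u-d} that leaves of $\varphi_n$ converge in the Euclidean metric on $\overline{\mathbb{D}}$ \emph{including endpoints on $S^1$}, so the linking of endpoint pairs, hence transversality and the product formula for the intersection, persists for large $n$. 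On a first-kind surface no such endpoint control exists, and the argument is indirect: pairs of geodesics meeting at angle $\le\alpha$ contribute mass $<\epsilon$, pairs meeting at angle $\ge\alpha$ are detected by non-homotopy rel $\partial K_3$ on a larger finite-area subsurface, and Strebel's heights convergence \cite[Theorem 24.7]{Strebel} ($i(\gamma,\mu_{\varphi_n})\to i(\gamma,\mu_{\varphi})$ for simple closed $\gamma$) yields convergence of the transverse measures of the finitely many arc classes. Note also that your proposed reduction order is inverted: the paper proves $X=\mathbb{D}$ \emph{first} and directly (it is the case with the strongest control), and uses doubling only to reduce the second-kind case to the first-kind one; reducing the disk to the first kind, as you suggest, would have you rest the easiest case on the hardest argument.
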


\begin{proof}
We first prove the theorem when $X=\mathbb{D}$. Fix $\epsilon >0$. Then, there exists $r=r(\epsilon )<1$   such that 
$\mathbb{D}_r=\{ z:|z|\leq r\}$ satisfies
\begin{equation}
\label{eq:compact_int_eps}
\int_{\mathbb{D}\setminus \mathbb{D}_r}|\varphi |<\epsilon , \int_{\mathbb{D}\setminus \mathbb{D}_r}|\varphi_n |<\epsilon , \int_{\mathbb{D}\setminus \mathbb{D}_r}|\psi |<\epsilon ,\ \mathrm{and}\ \int_{\mathbb{D}\setminus \mathbb{D}_r}|\psi_n |<\epsilon .
\end{equation}

We prove that a horizontal trajectory that meets a compact subset of $\mathbb{D}$ cannot have its endpoints on $S^1$ too close to each other (see the following lemma). Given a horizontal leaf $h$ in $\mathbb{D}$, we denote by $G(h)$ the hyperbolic geodesic in $\mathbb{D}$ which shares the endpoints with $h$. For $0<r<1$, let $\mathbb{D}_r=\{ z:|z|<r\}$.

\begin{lemma}
\label{lem:compact_leaves}
Let $\varphi,\varphi_n\in A(\mathbb{D})$ be such that $\int_X|\varphi -\varphi_n|\to 0$ as $n\to\infty$. Given $0<r<1$, there exists $R=R(r)<1$ such that for every trajectory $h\in\mathcal{F}_{\varphi_n}$ or $h\in\mathcal{F_{\varphi}}$, if $G(h)\cap \mathbb{D}_R=\emptyset$ then $h\cap \mathbb{D}_r=\emptyset$. 
\end{lemma}

\begin{proof}
Assume the lemma is false. Then there exist a fixed $0<r<1$, a sequence $R_n\to 1$ and a sequence $\varphi_{k(n)}$ with horizontal trajectories $h_n$ such that 
$$
G(h_n)\cap \mathbb{D}_{R_n}=\emptyset
$$
while
$$
h_n\cap\mathbb{D}_r\neq\emptyset .
$$
By $\varphi_{n(k)}$ we mean either an infinite subsequence of $\varphi_n$ or a fixed element $\varphi_{n_0}$ or $\varphi$. We rename $\varphi_{k(n)}$ to $\varphi_n$ and consider all three cases simultaneously.

Let $r'<1$ be an arbitrary number that satisfies $r'>r$. A lower bound on the  distance in the $\|\cdot\|_{\varphi_n}$-metric between $\{ z:|z|=r\}$ and $\{ z:|z|=r'\}$ is positive. Indeed, if there are $z_{j(n)}$, $z_{j(n)}'$ with $|z_{j(n)}|=r$, $|z_{j(n)}'|=r'$ with arbitrary short $\|\varphi_{j(n)}\|$-distances, then common subsequences converge to $z_{\infty}$, $z_{\infty}'$ with $|z_{\infty}|=r$, $|z_{\infty}'|=r'$ that have zero $\|\cdot\|_{\varphi}$-distance. This is impossible. Therefore, we have a lower bound $d>0$ on all $\|\varphi_n\|$-distances between $|z|=r$ and $|z|=r'$. 

Since $G(h_n)\cap\mathbb{D}_{R_n}=\emptyset$, it follows that the endpoints of $G(h_n)$ on $S^1$ are arbitrarily close when $n$ is large enough. Let $a_n\neq b_n$ be the endpoints of $G(h_n)$, which are also the endpoints of $h_n$. By Strebel \cite[Lemma 1.1]{Strebel-geom-u-d}, given $0<\rho_1<1$ there exists a positive $\rho_2<\rho_1$ (independent of $n$) such that there is a semicircle centered at $a_n$ with a radius $\rho$, $\rho_2<\rho <\rho_1$,  whose $\|\varphi_n\|$-length is less than $\frac{d}{2}>0$ (where $d$ is a lower bound from the previous paragraph). The choice of $\rho_2$ is independent of the sequence $\varphi_n$ while the choice of $\rho$ depends on $\varphi_n$. First, we choose the number $\rho_1$ small enough so that the semicircle with center $a_n$ does not intersect $|z|=r'$. Then, for $n$ large enough, the point $b_n$ is inside the semicircle with radius $\rho_2$ and, therefore, inside the semicircle with radius $\rho$. Therefore, $h_n$ has a subarc with both endpoints on the semicircle with radius $\rho$ that contains $z_n\in \{z:|z|=r\}$. Since $\|\varphi_n\|$-geodesics are the shortest connections in $\mathbb{D}$, the length of this subarc is strictly less than $\frac{d}{2}$. On the other hand, the subarc connects $|z|=r$ and $|z|=r'$, so its length has to be at least $d$. This is a contradiction that proves the lemma.
\end{proof}

For the fixed $r<1$ which satisfies (\ref{eq:compact_int_eps}), let $R^*$ be the maximum of two values of $R$ given by the Lemma \ref{lem:compact_leaves} for $\varphi_n$ and $\psi_n$. We choose a finite cover by quadrilaterals $\{ Q_1,\ldots ,Q_k\}$ (used in the definition of the intersection number)
of the intersections of the supports of measured laminations $\mu_{\varphi}$ and $\mu_{\psi}$ in the closed disk $\mathbb{D}_{R^*}$. The set of the geodesics of the support of $\mu_{\varphi}$ that cross $\cup_{i=1}^kQ_i$ is compact. However, the corresponding horizontal trajectories may still accumulate on the unit circle. This happens when an arc of the unit circle $S^1$ does not contain endpoints of the horizontal trajectories (see Strebel \cite{Strebel-geom-u-d}).

Fix $Q_i$ and consider the set $\mathcal{H}_{Q_i}$ of horizontal trajectories of $\varphi$ that correspond to the support of $\mu_{\varphi}$ that crosses $Q_i$. The set $\mathcal{H}_{Q_i}$ has two ``bounding'' trajectories $h_i^{left}$ and $h_i^{right}$ such that all other trajectories are between $h_i^{left}$ and $h_i^{right}$. Since $\mathcal{H}_{Q_i}$ may accumulate to $S^1$, we include the possibility that either $h_i^{left}$ or $h_i^{right}$ are intervals on $S^1$. For a dense set of trajectories in $\mathcal{H}_{Q_i}$, we choose a non-singular vertical arc intersecting each and note that the whole set $\mathcal{H}_{Q_i}$ can be covered by the countable set of (horizontal) strips defined by these trajectories (see Strebel \cite[\S 13.5]{Strebel}). If $\mathcal{H}_{Q_i}$ does not accumulate to an arc of $S^1$, then it can be covered by finitely many such strips. Otherwise, we may have to use countably many strips covering $\mathcal{H}_{Q_i}$. If strips are $\{ S_j\}_j$, we make disjoint strips by taking complements as in $S_1'=S_1$, $S_2'=S_2\setminus S_1$, $S_3'=S_3\setminus (S_1\cup S_2)$, etc. (see Strebel \cite[page 87, \S 13.5]{Strebel}). In the case of the accumulation to the unit circle $S^1$, we note that the strips accumulating to $S^1$ are inside a set of small area as in (\ref{eq:compact_int_eps}). Therefore, we can omit those strips or substrips of such strips if they accumulate to $S^1$. Therefore, we have finitely many strips of horizontal trajectories covering $\mathcal{H}_{Q_i}$ except for a subset in (\ref{eq:compact_int_eps}), and the vertical arcs defining the strips are compactly contained in $\mathbb{D}$. 

The horizontal trajectories through the endpoints of the vertical arcs may not be totally regular (for the definition, see Strebel \cite{Strebel-geom-u-d}). If necessary, we slightly decrease each vertical trajectory such that the horizontal trajectories through endpoints are totally regular and the total measure of the trajectories excluded by decreasing the vertical trajectories is less than $\epsilon$ over all strips. We perform this process on each $\mathcal{H}_{Q_i}$ to obtain a covering by finitely many disjoint strips, possibly omitting a set with $\|\varphi \|$-area less than $k\epsilon$.  The horizontal leaves of $\varphi_n$ converge to the totally regular horizontal leaves of $\varphi$ in the Euclidean metric on the closure $\overline{\mathbb{D}}=\mathbb{D}\cup S^1$ (see Strebel \cite[Theorem 4.1]{Strebel-geom-u-d}). It follows that each strip of $\varphi$ can be approximated in the Euclidean metric by the strips of $\varphi_n$ by taking the vertical arcs of $\varphi_n$ that converge to the vertical arcs of the corresponding strip of $\varphi$. Therefore, the difference between the $\varphi$-strip and its approximation $\varphi_n$-strip has a small $\|\varphi \|$-area when $n$ is large, as well as the difference between the $\varphi_n$-strip that approximates $\varphi$-strip, has a small $\|\varphi_n \|$-area when $n$ is large. Additionally, we can arrange that the finitely many sets of the strips of $\varphi_n$ that approximate the finite family of strips of $\varphi$ chosen above are pairwise disjoint. 

We first consider the contributions to the intersection numbers of the set of horizontal trajectories of $\varphi_n$ that are outside the finite set of strips approximating the finite set of $\varphi$. 
We denote by $\mathcal{C}_n$ the partial measured foliation of $\mathbb{D}$ obtained by erasing the horizontal leaves of $\varphi_n$ that belong to the finitely many strips in the above approximation. Then $\mathcal{C}_n$ is a partial measure foliation defined by the imaginary parts of the natural parameter for $\varphi_n$. The partial measured foliation $\mathcal{C}_n$ is proper and has a finite Dirichlet integral. By the realization theorem \cite[Theorem 1.1]{Saric25}, there exists a finite-area holomorphic quadratic differential $\varphi_{\mathcal{C}_n}$ on $\mathbb{D}$ that realizes $\mathcal{C}_n$. The Dirichlet principle (Theorem \ref{thm:dir_principle}) gives that
$$
\int_{\mathbb{D}}|\varphi_{\mathcal{C}_n}|\leq D(\mathcal{C}_n).
$$
By Theorem \ref{thm:inters-extremal_l}, we have
$$
i(\mathcal{C}_n,\mu_{\psi_n})^2\leq (\int_{\mathbb{D}}|\varphi_{\mathcal{C}_n}|)\cdot (\int_{\mathbb{D}}|\psi_n|))\leq 
D(\mathcal{C}_n)\cdot  (\int_{\mathbb{D}}|\psi_n|))\leq const\cdot\epsilon 
$$
for $n$ large enough. The symmetric conclusion holds when we approximate the horizontal trajectories of $\psi$ corresponding to the finite quadrilaterals covering $\mathbb{D}_R$. Therefore, for both approximations, the finite parts contribute to the intersection $i(\mu_{\varphi_n},\mu_{\psi_n})$ with an error of at most a constant multiple of $\epsilon$.

Consider the contributions to the intersection number of the finitely many strips of $\varphi_n$ and $\psi_n$ approximating the finitely many strips of $\varphi$ and $\psi$. The intersection number $i(\mu_{\varphi},\mu_{\psi})$ is approximated by the intersections over the finitely many rectangles $\{ Q_1,\ldots ,Q_k\}$, which are divided into finitely many strips. Each hyperbolic geodesic homotopic to a horizontal trajectory of $\varphi$ crossing a rectangle $Q_i$ transversely intersects each hyperbolic geodesic homotopic to a horizontal trajectory of $\psi$ crossing the rectangle $Q_i$. Therefore, any two strips for $\varphi$ and $\psi$ corresponding to a single $Q_i$ transversely intersect, and the contribution to the intersection number is the product of their transverse measures. The transverse intersection is observed because the endpoints of each horizontal trajectory of $\varphi$ separate the endpoints of each horizontal trajectory of $\psi$ for the two strips. The convergence of the horizontal trajectories of the strips for $\varphi_n$ and $\psi_n$ in $\overline{\mathbb{D}}$ implies that the separation on the endpoints of the approximating strips is true for $n$ large enough (this is the reason that we use totally regular trajectories). This separation holds for all strips when $n$ is sufficiently large, as we consider only a finite number of strips. Moreover, the locally uniform convergence of $\varphi_n$ to $\varphi$ and $\psi_n$ to $\psi$ implies that the transverse measures of the approximating strips converge as well. Therefore 
$|i(\mu_{\varphi_n},\mu_{\psi_n})-i(\mu_{\varphi},\mu_{\psi})|$ is less than a constant multiple of $\epsilon$ for all $n$ large enough. Since $\epsilon >0$ is arbitrary, we conclude that the intersection numbers converge. This finishes the proof in the case $X=\mathbb{D}$.

\vskip .2 cm

We assume now that $X=\mathbb{D}/\Gamma$ with $\Gamma$ of the first kind. 
We first prove a lemma that controls the positions of the hyperbolic geodesics that are homotopic to horizontal leaves of holomorphic quadratic differentials. Unlike in the case of the unit disk $\mathbb{D}$ (see Lemma \ref{lem:compact_leaves}), the validity of the lemma is for an approximating sequence of a single finite-area holomorphic quadratic differential. We do not claim uniformity in the norm.

\begin{lemma}
\label{lem:compact_leaves_surface}
Let $X=\mathbb{D}/\Gamma$ with $\Gamma$ of the first kind.
Let $\varphi,\varphi_n\in A(X)$ be such that $\int_X|\varphi -\varphi_n|\to 0$ as $n\to\infty$. Given a finite area geodesic subsurface $C\subset X$ there exists a finite area geodesic subsurface $C_1\subset X$ such that for every trajectory $h\in\mathcal{F}_{\varphi_n}$ or $h\in\mathcal{F_{\varphi}}$, if $G(h)\cap C_1=\emptyset$ then $h\cap C=\emptyset$. 
\end{lemma}

\begin{proof}
Assume, on the contrary, that a sequence $h_n$ of distinct horizontal trajectories of $\varphi_n$ intersects a finite area geodesic subsurface $C$ of $X$ while the corresponding sequence of geodesics $G(h_n)$ leaves every finite area geodesic subsurface of $X$. The sequence $h_n$ has a subsequence that converges locally uniformly to a horizontal trajectory $h$ of $\varphi$, where $h$ is either regular or is composed of finitely or countably many singular trajectories that are concatenated. 
After renumbering, we denote the subsequences by $h_n$ and $\varphi_n$. 

Let $\{ X_k\}_k$ be an exhaustion of $X$ by nested finite area geodesic subsurfaces. 
Since $G(h_n)$ leaves every finite area geodesic subsurface of $X$, it follows that for each $k>0$ there exists $n_k\in \mathbb{N}$ such that
$$
G(h_n)\cap X_k=\emptyset 
$$
for all $n\geq n_k$.
In other words, the horizontal trajectories $h_n$ can be homotoped out of $X_k$ for all $n\geq n_k$. It follows that $h$ can be homotoped out of $X_k$ by choosing a large enough $n\geq n_k$ such that $h_n\cap X_k$ and $h\cap X_k$ are homotopic relative to $\partial X_k$. Since this is true for all $k$, the horizontal trajectory $h$ straightens to a hyperbolic geodesic $G(h)$ which does not intersect any simple closed geodesic on $X$. This is impossible since $\Gamma$ is of the first kind. The contradiction establishes the lemma.
\end{proof}

Fix $\epsilon >0$. Let $K_0\subset X$ be a finite area geodesic subsurface such that, for any $K'\supset K_0$,
$$
i(\mu_{\varphi},\mu_{\psi})<i_{K'}(\mu_{\varphi},\mu_{\psi})+\epsilon
$$
where $i_{K'}(\mu_{\varphi},\mu_{\psi})$ is the total intersection number in $K'$ of the measured geodesic laminations $\mu_{\varphi}$ and $\mu_{\psi}$.

Let $K$ be a finite area subsurface of $X$ with geodesic boundary such that $\int_{X\setminus K}|\varphi |<\epsilon$, $\int_{X\setminus K}|\varphi_n|<\epsilon$, $\int_{X\setminus K}|\psi_n |<\epsilon$ and $\int_{X\setminus K}|\psi_n|<\epsilon$ for all $n\in\mathbb{N}$. We erase all horizontal leaves of the foliations for $\varphi$, $\varphi_n$, $\psi$, and $\psi_n$ that intersect $K$. We are left with partial foliations of $X$ whose Dirichlet integrals are less than $\epsilon$. By Theorem \ref{thm:inters-extremal_l}, the intersection numbers between the obtained foliations are less than a constant multiple of $\epsilon$. 

For the subsurface $K$ defined above, there exists another finite area subsurface $K_1$ of $X$ such all horizontal trajectories of $\varphi$, $\varphi_n$, $\psi$, and $\psi_n$ that do not intersect $K$ are homotopic to the hyperbolic geodesics that do not intersect $K_1$ (this follows by Lemma \ref{lem:compact_leaves_surface}). 

Let $K_2$ be a finite area subsurface of $X$ that contains $K_0\cup K_1$. From the above discussion, we have 
$$
i(\mu_{\varphi},\mu_{\psi})<i_{K_2}(\mu_{\varphi},\mu_{\psi})+\epsilon 
$$
and by Theorem \ref{thm:inters-extremal_l}, for all $n\in\mathbb{N}$,
$$
i_{X\setminus K_2}(\mu_{\varphi_n},\mu_{\psi_n})<\epsilon .
$$
To finish the proof, we need to show that
\begin{equation}
\label{eq:conv-int-on-compact}
i_{K_2}(\mu_{\varphi_n},\mu_{\psi_n})\to i_{K_2}(\mu_{\varphi},\mu_{\psi})
\end{equation}
when $n\to\infty$. 

Unlike in the case of finite area holomorphic quadratic differentials on the unit disk, we do not have control over the endpoints of the lifts to $\mathbb{D}$ of the nearby horizontal trajectories on $X$. For this reason, the rest of the argument is indirect compared to the unit disk case.

Let $\omega$ be a Dirichlet fundamental polygon for the group $\Gamma$ on the unit disk $\mathbb{D}$. Since the geodesic laminations arising from finite area holomorphic quadratic differentials do not have leaves ending at the punctures, we can excise neighborhoods of the punctures with horocyclic boundaries of lengths $1$. The measured laminations do not enter these neighborhoods. Therefore, we can replace the finite area subsurface $K_2$ with a compact subsurface $K_2'$ (by excising the neighborhoods of the punctures) such that all geodesics (in the supports of the measured laminations we are considering) that intersect $K_2$ also intersect $K_2'$. Let $\tilde{K}_2'$ be the compact subset of $\omega$ that covers $K_2'$. 

Consider two simple geodesics $g_1$ and $g_2$ in $X$ intersecting $K_2'$. If $g_1\cap K_2$ and $g_2\cap K_2$ are homotopic in $K_2$ relative to the boundary $\partial K_2$, then $g_1$ and $g_2$ may still have an intersection point in $K_2'$. By knowing the intersection numbers between all simple closed curves in $K_2$ and the geodesic arcs $g_1\cap K_2$ and $g_2\cap K_2$, it is not possible to determine whether the geodesic arcs intersect in $K_2$. 
Let $\tilde{g}_1$ and $\tilde{g}_2$ be the lifts of $g_1$ and $g_2$ to $\mathbb{D}$ that intersect $\tilde{K}_2'$. If the geodesics $\tilde{g}_1$ and $\tilde{g}_2$ intersect inside $\tilde{K}_2'$ at a small angle at most $\alpha >0$, then their endpoints on $S^1$ are on a small Euclidean distance. Equivalently, two simple geodesics $g_1$ and $g_2$ intersect inside $K_2'$ at an angle at least $\alpha >0$ if there exists a finite area geodesic subsurface $K_3$ which contains $K_2$ such that $g_1\cap K_3$ and $g_2\cap K_3$ are not homotopic relative to the boundary $\partial K_3$. The surface $K_3$ depends on $K_2$ and $\alpha$.

Using the monotonicity of measures, it follows that the total intersection number between the geodesics in the supports of $\mu_{\varphi}$ and $\mu_{\psi}$ (inside $K_2$) that intersect at angles at most $\alpha >0$ is less than $\epsilon $ for $\alpha$ small enough. Because of this estimate, we restrict our attention to the pairs of geodesics (in the supports of $\mu_{\varphi}$ and $\mu_{\psi}$) that intersect $K_2$ and are non-homotopic in $K_3$ relative to $\partial K_3$. The measured geodesic laminations $\mu_{\varphi_n}$ and $\mu_{\psi_n}$ restricted to $K_3$ also define measured laminations on $K_3$ which have the same property that if two geodesic arcs intersect at an angle larger than $\alpha$ then the geodesic arcs are not homotopic modulo the boundary $\partial K_3$.

By Strebel (\cite[Theorem 24.7, page 162]{Strebel}, the intersection numbers $i(\gamma ,\mu_{\varphi_n})$ and $i(\gamma ,\mu_{\psi_n})$ converge to $i(\gamma ,\mu_{\varphi})$ and $i(\gamma ,\mu_{\psi})$ for all simple closed geodesics $\gamma$ on $K_3$. The number of homotopy classes of mutually disjoint simple arcs connecting $\partial K_3$ to itself is finite (depends only on the topology of $K_3$). Since the intersection numbers with simple closed geodesics converge, the transverse measures of the homotopy classes also converge. Therefore, the intersections $i_{K_2}(\mu_{\varphi_n},\mu_{\psi_n})$ are converging to $i_{K_2}(\mu_{\varphi},\mu_{\psi})$ since $i_{K_2}(\mu_{\varphi},\mu_{\psi})$ is the sum of the intersections of finitely many strips up to an error of at most $\epsilon$. If $\mu_{\varphi_n}$ (or $\mu_{\psi_n}$) has more homotopy classes of arcs connecting $\partial K_3$ to itself, then the measures are small again by the convergence of the intersection with simple closed curves.

If some subset of geodesics does not leave $K_3$, then they are divided into strips, and the intersection numbers again converge since the transverse measures $\mu_{\varphi}$ (or $\mu_{\psi}$) of the strips are approximated by $\mu_{\varphi_n}$ (or $\mu_{\psi_n}$). All the intersections outside $K_2$ are small, and therefore it is enough to consider the restrictions to $K_2$ of the intersections in $K_3$. By letting $\epsilon\to 0$ we conclude that 
$$
\lim_{n\to\infty}i(\mu_{\varphi_n},\mu_{\psi_n})=i(\mu_{\varphi},\nu_{\psi}). 
$$

\vskip .2 cm

It remains to consider the case when $X=\mathbb{D}/\Gamma$ with $\Gamma$ of the second kind. As in the proof of \cite[Theorem 1.1]{Saric25}, we choose finitely many points $A_k$ in each component of the open ideal boundary of $X$ and double $X$ across its open ideal boundary to obtain a Riemann surface $\widehat{X}$ whose covering group is of the second kind. Then $\widehat{X}_k=\widehat{X}\setminus A_k$ also has a covering group of the second kind, and $\cup_kA_k$ is dense in the open ideal boundary of $X$. The horizontal foliations of the holomorphic quadratic differentials $\varphi_n$, $\psi_n$, $\varphi$, and $\psi$ are measured foliations on $X$ whose leaves lift to the arcs in the universal covering of $X$ with exactly two endpoints (see \cite{MardenStrebel1}). Therefore, if a horizontal ray accumulates an open arc of the ideal boundary, then it has a unique endpoint on the open boundary. Thus, any ray accumulating on the open ideal boundary extends by doubling to a ray in $\widehat{X}$. We double the horizontal foliations, erase the leaves with endpoints in $A_k$, and also erase the leaves on $\widehat{X}_k$ that are homotopic to a point in $\widehat{X}_k$ or a single puncture (i.e., a point in $A_k$). By \cite[Theorem 1.1]{Saric25}, the partial foliations can be represented by the holomorphic quadratic differentials $\widehat{\varphi}_n^k$, $\widehat{\psi}_n^k$, $\widehat{\varphi}^k$, and $\widehat{\psi}^k$ on $\widehat{X}_k$, necessarily of finite area. The holomorphic quadratic differentials are invariant under the reflection across the ideal boundary of $X$. Therefore, the intersections with the cross-cuts on $X$ are equal to half of the intersections with the doubled closed curves on $\widehat{X}_k$. By Theorem \ref{thm:inters-extremal_l}, the contributions to the intersection numbers of the erased leaves are less than $\epsilon >0$ for all  $k\geq k_{\epsilon}$.
By the above-established result for covering groups of the first kind, we conclude that 
$$
|i(\mu_{\widehat{\varphi}_n^k}, \mu_{\widehat{\psi}_n^k})-i(\mu_{\widehat{\varphi}^k},\mu_{\widehat{\psi}^k})|<\epsilon
$$
for all $n\geq n_0$ and a fixed $k\geq k_{\epsilon}$. 

Denote by $\varphi_n^k$, $\psi_n^k$, $\varphi^k$, and $\psi^k$ the restrictions of $\widehat{\varphi}_n^k$, $\widehat{\psi}_n^k$, $\widehat{\varphi}^k$, and $\widehat{\psi}^k$ on $\widehat{X}_k$ to $X$, we conclude that
\begin{equation*}
\begin{split}
|i(\mu_{{\varphi}_n}, \mu_{{\psi}_n})-i(\mu_{{\varphi}},\mu_{{\psi}})|\leq |i(\mu_{{\varphi}_n}, 
\mu_{{\psi}_n})-i(\mu_{{\varphi}_n^k}, \mu_{{\psi}_n^k})|+
\\
|i(\mu_{{\varphi}_n^k}, \mu_{{\psi}_n^k})-i(\mu_{{\varphi}^k},\mu_{{\psi}^k})|+
|i(\mu_{{\varphi}^k}, \mu_{{\psi}^k})-i(\mu_{{\varphi}},\mu_{{\psi}})|\\
\leq 3\epsilon 
\end{split}
\end{equation*}
for all $n\geq n_0(\epsilon )$. Since $\epsilon >0$ is arbitrary, the convergence follows.
\end{proof}

\section{The approximation by Jenkins-Strebel differentials}

If $X$ is a parabolic Riemann surface, then every finite area holomorphic quadratic differential $\varphi$ on $X$ is the limit of a sequence of Jenkins-Strebel differentials $\varphi_n$ (see \cite[Theorem 1.2]{Saric24}). 
We show that this fact is false, in general, when $X$ is a non-parabolic Riemann surface.

We first give an example using the continuity of the intersection number.

\begin{example}
Let $X=[-\frac{1}{2},\frac{1}{2}]\times [-\frac{1}{2},\frac{1}{2}]\setminus A$, where $A=\{ \pm (\frac{1}{2}- \frac{1}{n}):n\geq 3\}$. Then $X$ is an infinite Riemann surface. We take $\varphi (z) dz^2 = -dz^2$. The horizontal foliation of $\varphi$ consists of vertical arcs in $X$. Let $\psi$ be the finite area holomorphic quadratic differential in $X$ which is represented by a partial measured lamination $\mathcal{H}$ on $X$ given by the horizontal arcs in $[-\frac{1}{2},\frac{1}{2}]\times [\frac{1}{4},\frac{1}{2}]$ with the Euclidean transverse measure (see Figure 3). 

\begin{figure}[h]
\begin{center}
{\centerline{\epsfig{file =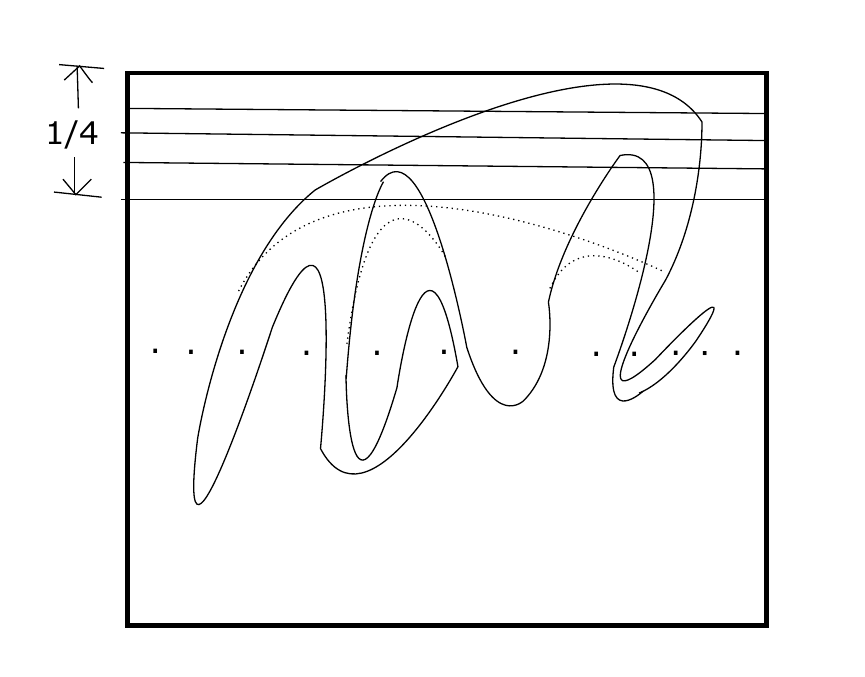,width=9.0cm,angle=0} }}
\vspace{-20pt}
\end{center}
\caption{The closed leaf homotopes off the horizontal leaves.}
\end{figure}

We assume that $\varphi$ is approximated in the $L^1$-norm by a sequence of Jenkins-Strebel differentials $\varphi_n$. 
Then we would have
$$
\lim_{n\to\infty} i(\mu_{\varphi_n},\mu_{\psi})=i(\mu_{\varphi},\mu_{\psi})=\frac{1}{4}.
$$
On the other hand, a simple closed non-trivial loop on $X$ can be homotoped such that it does not intersect any leaf of $\mathcal{H}$. Therefore, the left-hand side of the above equality is zero, which gives a contradiction. Thus $\varphi$ cannot be approximated by Jenkins-Strebel differentials. 
\end{example}

The above example easily generalizes to any Riemann surface $X=\mathbb{H}/\Gamma$, where $\Gamma$ is a group of the second kind.

\begin{theorem}
\label{thm:second-kind-non-approx}
Let $X=\mathbb{H}/\Gamma$ be a Riemann surface of the second kind. Then there exists a finite area holomorphic quadratic differential $\varphi$ on $X$ that cannot be approximated by a sequence of Jenkins-Strebel differentials in the $L^1$-norm.
\end{theorem}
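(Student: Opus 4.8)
The plan is to generalize the construction of the preceding Example, replacing its ad hoc punctured square with the open ideal boundary that every second-kind surface carries. Since $\Gamma$ is of the second kind, the limit set $\Lambda$ is a proper closed subset of $S^1=\partial_\infty\mathbb{H}$, so the domain of discontinuity $\Omega=S^1\setminus\Lambda$ is nonempty and open, and $\Gamma$ acts properly discontinuously on $\mathbb{H}\cup\Omega$. The idea is to build a finite-area holomorphic quadratic differential $\psi$ whose associated measured lamination $\mu_\psi$ consists entirely of leaves escaping to an arc of $\Omega$, so that $i(C,\mu_\psi)=0$ for \emph{every} simple closed geodesic $C$, together with a second differential $\varphi$ that crosses these leaves and hence satisfies $i(\mu_\varphi,\mu_\psi)>0$. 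The contradiction with Jenkins--Strebel approximation is then immediate from the continuity theorem.

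First I would produce $\psi$. By proper discontinuity I can choose a point $\xi\in\Omega$ and a small open arc $J\subset\Omega$ about $\xi$ that is precisely invariant, i.e.\ $\gamma J\cap J=\emptyset$ for all $\gamma\neq\mathrm{id}$ (shrinking $J$ if $\xi$ has nontrivial stabilizer). A region $\widetilde R\subset\mathbb{H}$ foliated by arcs with both endpoints on $J$ (for instance geodesics asymptotic to the boundary arc $J$, equipped with a transverse measure chosen to decay so that the Dirichlet integral is finite) then projects injectively to $X$ and defines a proper, integrable partial measured foliation $\mathcal H$. By Theorem \ref{thm:mf=qd} there is a unique finite-area holomorphic quadratic differential $\psi$ realizing $\mathcal H$. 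The key point is the unlinking estimate: every lift of a leaf of $\mu_\psi$ has both endpoints inside a single translate $\gamma J\subset\Omega$, while every lift of a simple closed geodesic $C$ has its two endpoints in $\Lambda\subset S^1\setminus\gamma J$. Two such geodesics have their endpoint pairs in the two distinct complementary arcs of $S^1\setminus\gamma J$, so they cannot link and are therefore disjoint in $\mathbb{H}$; projecting, no leaf of $\mu_\psi$ meets $C$ transversally, whence
$$
i(C,\mu_\psi)=0 \quad\text{for every simple closed geodesic } C.
$$

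Next I would build $\varphi$ so that $i(\mu_\varphi,\mu_\psi)>0$: inside a compact subregion of the image of $\widetilde R$, lay down a partial measured foliation whose leaves cross the leaves of $\mathcal H$ transversally with positive total transverse measure (in the natural coordinate for $\psi$, a compactly supported piece of the vertical foliation), and realize it by a finite-area differential $\varphi$ via Theorem \ref{thm:mf=qd}. Now suppose $\varphi$ were the $L^1$-limit of Jenkins--Strebel differentials $\varphi_n$. By the joint continuity of the intersection number (Theorem \ref{thm:int_numb_cont}), $i(\mu_{\varphi_n},\mu_\psi)\to i(\mu_\varphi,\mu_\psi)>0$. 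On the other hand, each $\varphi_n$ being Jenkins--Strebel means $\mu_{\varphi_n}=b_nC_n$ is a single simple closed geodesic $C_n$ carrying the cylinder height $b_n$ as weight, so by the displayed identity $i(\mu_{\varphi_n},\mu_\psi)=b_n\,i(C_n,\mu_\psi)=0$ for every $n$. This contradiction shows $\varphi$ admits no such approximation.

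I expect the main obstacle to be making the construction of $\psi$ uniform across all second-kind groups --- in particular, selecting a precisely invariant arc $J$ and a transverse measure that simultaneously keep the partial foliation proper and of finite Dirichlet integral, so that Theorem \ref{thm:mf=qd} applies --- and checking the unlinking estimate in the presence of parabolic or elliptic elements and of infinitely many translates $\gamma J$. Once $i(C,\mu_\psi)=0$ for all simple closed curves is established, the remainder is a direct application of the continuity theorem exactly as in the Example; the degenerate case $X=\mathbb{H}$ is vacuous, since there are no Jenkins--Strebel differentials there at all.
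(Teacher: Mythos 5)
Your overall strategy is the same as the paper's: produce a test differential $\psi$ whose horizontal leaves run into the open ideal boundary, so that every simple closed geodesic can be unlinked from $\mu_\psi$ and hence every Jenkins--Strebel lamination $b_nC_n$ has $i(\mu_{\varphi_n},\mu_\psi)=0$, then contradict the continuity theorem (Theorem \ref{thm:int_numb_cont}) against $i(\mu_\varphi,\mu_\psi)>0$. Where you differ is in the construction. The paper works entirely on $X$ itself: it takes two Jordan domains $J_1,J_2$ whose boundaries meet the ideal boundary in four pairwise disjoint arcs $I_1,\dots,I_4$ with $I_3$ separating $I_1$ from $I_2$, maps each $J_i$ conformally onto a Euclidean rectangle with the ideal arcs as vertical sides, and pulls back the horizontal foliation of $dz^2$. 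This makes finiteness of the Dirichlet integral automatic (it is the area of the rectangle), gives $i(\mu_\varphi,\mu_\psi)=v_1v_2>0$ directly from the interleaving of the feet, and completely sidesteps the issues you flag as your main obstacles: no precisely invariant arc, no equivariant region $\widetilde R$ in $\mathbb{H}$, no case analysis for elliptic or parabolic elements. Your unlinking argument at the limit set (leaf endpoints in a translate $\gamma J\subset\Omega$, closed-geodesic endpoints in $\Lambda$, hence no linking) is a correct and in fact more explicit justification of the step the paper states as ``each closed curve can be homotoped so as not to intersect any leaf of $\mathcal{F}_2$,'' so that part of your write-up is a genuine (small) improvement in precision.

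There is, however, one step that fails as literally written: your $\varphi$ is ``a compactly supported piece of the vertical foliation'' of $\psi$, i.e.\ a foliation by compact vertical arcs inside a compact subregion. Such a partial measured foliation is not \emph{proper} in the sense of Definition \ref{def:partial-fol} and the paragraph following it --- properness requires each lifted horizontal trajectory to have two endpoints on $\partial_\infty\mathbb{H}$ --- so Theorem \ref{thm:mf=qd} does not apply to it and you get no finite-area holomorphic $\varphi$ realizing it. The fix is easy but necessary: either take \emph{whole} vertical trajectories of $\psi$ (the vertical foliation of $\psi$ is the horizontal foliation of the finite-area differential $-\psi$, so you may simply use $\varphi=-\psi$, or the sub-foliation of full leaves through a compact set), or do as the paper does and use a second conformal rectangle $J_2$ with ideal feet interleaving those of $J_1$, which yields properness and $i(\mu_\varphi,\mu_\psi)>0$ at once. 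Relatedly, your construction of $\psi$ from geodesics asymptotic to $J$ ``with a transverse measure chosen to decay so that the Dirichlet integral is finite'' is asserted rather than carried out; the conformal-rectangle construction removes this loose end as well. With those repairs your argument is correct and delivers the theorem by the same mechanism as the paper.
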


\begin{proof}
Let $I$ be an open arc on the ideal boundary of $X$. Choose two Jordan domains $J_1$ and $J_2$ in $X$ such that the boundary of $J_1$ consists of two disjoint closed subarcs $I_1$ and $I_2$ of $I$, and two other Jordan arcs connecting the corresponding endpoints of $I_1$ and $I_2$. Similarly, the boundary of $J_2$ consists of two disjoint closed subarcs $I_3$ and $I_4$ of $I$ together with two more Jordan arcs connecting the corresponding endpoints of $I_3$ and $I_4$. We require that $I_1$, $I_2$, $I_3$ and $I_4$ are pairwise disjoint  and that $I_3$ separates $I_1$ and $I_2$. Finally, the boundaries of $J_1$ and $J_2$ intersect in four points (a minimal possible intersection, see Figure 4).

\begin{figure}[h]
\begin{center}
{\centerline{\epsfig{file =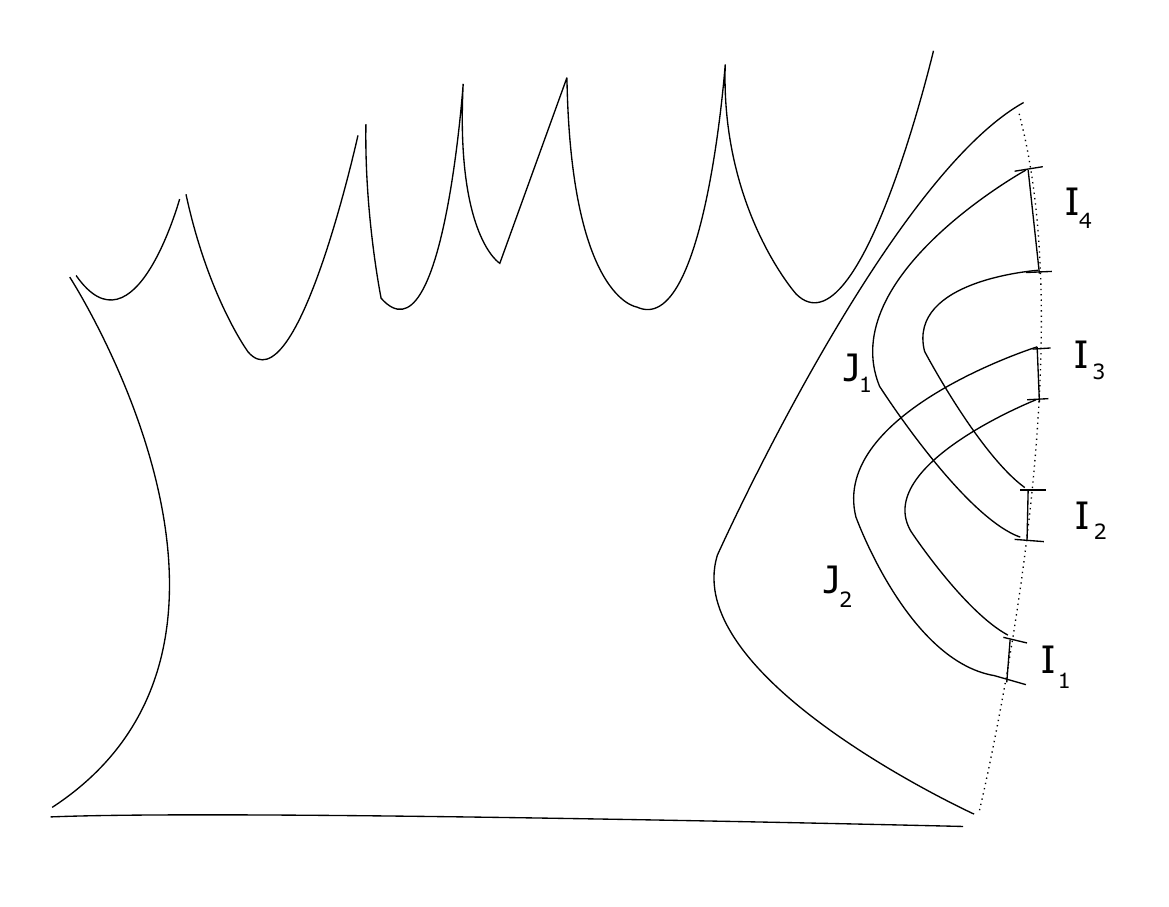,width=9.0cm,angle=0} }}
\vspace{-20pt}
\end{center}
\caption{The Jordan domains $J_1$ and $J_2$.}
\end{figure}

We map $J_1$ conformally to a Euclidean rectangle 
such that $I_1$ and $I_2$ are vertical sides. Then we denote by $\mathcal{F}_1$ the partial measured foliation on $X$ that is obtained by pulling back under the conformal map the horizontal foliation of $dz^2$ on the rectangle. Then the Dirichlet integral of $\mathcal{F}_1$ is finite and \cite[Theorem 1.1]{Saric25} implies that there exists a finite area holomorphic quadratic differential $\varphi$ whose measured horizontal foliation is homotopic to $\mathcal{F}_1$. Repeat the same process for $J_2$ and obtain a finite area holomorphic quadratic differential $\psi$ on $X$ which realizes $\mathcal{F}_2$. Let $v_i$ be the Euclidean height of the rectangle conformal to $J_i$ for $i=1,2$. Then we have
$$
i(\mu_{\varphi},\mu_{\psi})=v_1v_2>0.
$$
On the other hand, given a sequence of Jenkins-Strebel differentials $\varphi_n$ that approximates $\varphi$, each closed curve can be homotoped such that it does not intersect any leaf of $\mathcal{F}_2$. Therefore
$$
\lim_{n\to\infty}i(\mu_{\varphi_n},\mu_{\psi})=\lim_{n\to\infty}0=0,
$$
which is a contradiction.
\end{proof}

\end{document}